 \newtheorem{proposition}{Proposition}
\journal{to be submitted in extended form}
\begin{document}

\begin{frontmatter}



\title{Dynamic Clustering of Histogram Data Based on Adaptive Squared Wasserstein Distances}


\author[add1]{Antonio Irpino}
 \author[add1]{Rosanna Verde}
 \author[add2]{Francisco de A.T. De Carvalho}

\address[add1]{Dipartimento di Studi Europei e Mediterranei \\
Second University of Naples \\
81100 Caserta, Italy\\
               \{antonio.irpino, rosanna.verde\}@unina2.it}
\address[add2]{Centro de Informatica--CIn/UFPE, \\
       Av. Prof. Luiz Freire, s/n, Ciadade Universitaria, CEP 50.740--540 Recife-PE, Brazil\\
       fact@cin.ufpe.br}

\begin{abstract}
This paper deals with clustering methods based on adaptive distances for histogram data using a dynamic clustering algorithm. Histogram data describes individuals in terms of empirical distributions. These kind of data can be considered as complex descriptions of phenomena observed on complex objects: images, groups of individuals, spatial or temporal variant data, results of queries, environmental data, and so on.  The Wasserstein distance is used to compare two histograms. The Wasserstein distance between histograms is constituted by two components: the first based on the means, and the second, to internal dispersions (standard deviation, skewness, kurtosis, and so on) of the histograms.\\
To cluster sets of histogram data, we propose to use Dynamic Clustering Algorithm, (based on adaptive squared Wasserstein distances) that is a k-means-like algorithm for clustering a set of individuals into $K$ classes that are apriori fixed.
 The main aim of this research is to provide a tool for clustering histograms, emphasizing the different contributions of the histogram variables, and their components, to the definition of the clusters. We demonstrate that this can be achieved using  adaptive distances.\\
Two kind of adaptive distances are considered: the first takes into account the variability of each component of each descriptor for the whole set of individuals; the second takes into account the variability of each component of each descriptor in each cluster.
We furnish interpretative tools of the obtained partition based on an extension of the classical measures (indexes) to the use of  adaptive distances in the clustering criterion function.
Applications on synthetic and real-world data corroborate the proposed procedure.
\end{abstract}

\begin{keyword}
Histogram data \sep Clustering method \sep Wasserstein
distance \sep Adaptive distance

\end{keyword}

\end{frontmatter}



\section{Introduction}
\label{intro}
In many real experiences, data are collected and/or represented by histograms representing empirical distributions of phenomenon. In the framework of computer vision, the characteristics of images are usually represented as histograms of different masses. Other fields of applications also use histogram descriptions: for privacy preserving matters, data about a phenomenon (for example, flows of a bank account) can be summarized by histograms, as well as the dissemination of official statistics.
Cluster analysis aims to collect a set of objects in a
number of homogeneous clusters according to the values they assume
with respect to a set of observed variables.

In this paper we deal with a clustering procedure to partition a set
of histogram data, in a predefined
number of clusters. Histogram data were introduced in the context of
Symbolic Data Analysis by \citet{BoDid00} and they are
defined by a set of contiguous intervals of real domain which
represent the support of each histogram, with associated a system of
weights (frequencies, densities).

Symbolic Data Analysis (SDA) is a domain in the area of knowledge
discovery related to multivariate analysis, pattern recognition and
artificial intelligence, aiming to provide suitable methods
(clustering, factorial techniques, decision trees, etc.) for
managing aggregated data described by multi-valued variables, i.e.,
where the cells of the data table contain sets of categories,
intervals, or weight (probability) distributions (for further
insights about the SDA approach, see \cite{BoDid00},
\cite{Billard2007} and \cite{DiNoi08}).

Several proposals have been presented in the literature for
clustering histogram data (see \cite{IrVer06},\cite{IRVERLEC06},
\cite{VeIR08}, \cite{VerIRP08did}). Dynamic Clustering (DC)
(\cite{Did71},\cite{DiSIM76}) is proposed as a suitable method to
partition a set of data represented by frequency distributions. We
recall that DC needs to define a proximity function, to assign the
individuals to the clusters, and to choose a suitable way to represent the
clusters by an element which optimizes a criterion
function. Further, the representative element of a cluster, called
prototype, has to be consistent with the description of the clustered
elements: i.e., if data to be clustered are distributions, the prototype must be
also a distribution.

The DC method \cite{Did71} is a general partitioning algorithm of a set of
objects in $K$ clusters. It looks for the solution by optimizing a
criterion of best fitting between the partition and the
representation of the clusters of such partition. In DC, the choice of a suitable dissimilarity plays a central role for the definition of the allocation and of representation phases. \emph{k-means} algorithm is a particular case of DC when in the criterion function is used the squared Euclidean distance. According to the nature of data and the chosen dissimilarity function, DC is a more general schema of partition around a set of prototypes. In the case of \emph{k-means} algorithm prototypes are the means of each cluster. According to the optimized criterion, prototypes can also be regression lines, factorial axis, etc. \\
The comparison of histogram data can be seen a particular case of comparison of distribution functions. Several distances and dissimilarities have been presented in the literature, some of these, used for histogram data, are presented in \cite{VerIRP08did}. Another good review of distances between distributions can be found in \cite{GiSu02}. In the special field of computing vision, \cite{Rubn00} introduced the Earth Mover's distance (EMD) for color and texture images. This distance can be applied to distributions of points. It is worth of notice that EMD for histograms of pixel intensities is equivalent to the Mallows, or Wasserstein, distance on probability distributions \cite{Mall72}, \cite{LeBick01}.
 The family of distances based on Wasserstein metric permits to obtain interesting interpretative results based on the characteristics or on the moments of the compared distributions (see \cite{VerIRP08did} for details).
It has also proved such distance can be decomposed into two
components: the first related to the means of the histograms, and
the second to their internal dispersions.

One of the main issues in clustering (in multivariate analysis, in
general) is to take into account the roles of the
different variables. While the use of standard distances allows to find spherical groups (like in k-means), the main advantage of using adaptive distances is the possibility of
identifying clusters of different size (in terms of variability) and
orientation in the space (in terms of main alignment of the cluster
with one ore more directions of a set of variables).
One way is to homogenize the variables by means
of a standardization step. Another way is to use an adaptive distance in the
clustering algorithm that includes, in the optimization process, the
tuning of a set of weights to associate with each variable (for all
the clusters or within each cluster). \citet{DECAYVES09,DECLEC09,DECSOUZ07}, \cite{DeCDeS10} proposed several
adaptive distances for the dynamic clustering of intervals and histogram data. In the  paper \cite{DeCDeS10} is introduced a system of weights for each variable for Euclidean distance. The weights are dependent from the distance but not the optimization process so that the same schema can be  be easily extended for the Wasserstein distance too.
\\

In the present paper we propose two
adaptive approaches for clustering histogram data, based on
Wasserstein distance. This metric allows to compare histogram data
with respect both frequency and support while Euclidean distance compare histogram taking into account only one component (frequency or support). Further, \citet{IrpinoR07} showed that is possible to decompose the Wasserstein distance between two histograms in two (additive and independent) components: the first is related to the locations of the histograms, while the second is related to the different variability of the two histograms. In order to take advantage from this decomposition, we propose the following approaches for the definition of adaptive distances.

In the first approach, we propose to associate two sets of weights for each variable and each component in which is decomposed the distance, the first is globally estimated for all the clusters at once; the second is locally estimated for each cluster.

In order to furnish clustering interpretative tools, we propose an extension of classical ratios
based on within (intra-cluster) sum of squares, the between (inter-cluster) sum of squares and the total sum of squares
having proved the decomposition of the inertia of a set of histogram data computing with the adaptive (squared) Wasserstein distances.

This paper is organized as follows: in Section \ref{sec:histo}, we
introduce the definitions of histogram data and of the Wasserstein
distance for histograms. In Section \ref{sec:DCA}, starting from
the Dynamic Clustering Algorithm with non-adaptive distances, we
propose two schemas where the
adequacy criterion is based on adaptive squared Wasserstein
distances for histogram data. In section \ref{sec:tools}, we introduce some
tools for the interpretation of the clustering results. In Section
\ref{sec:ExperimentalResults},  two applications are shown: one using synthetic data in order to prove the
usefulness of the proposed methods based on the variability
structure of the data; the other one, using a real dataset in order to demonstrate
the application in a real situation and to show how to interpret the
results of a classic clustering task on histogram data. Section
\ref{sec:conclu} ends the paper with some conclusions and
perspectives about the proposed clustering methods.

\section{Histogram data and Wasserstein distance}
\label{sec:histo} The clustering of data expressed as histograms can be useful to discover
typologies of phenomena on the basis of the similarity of their
distributions. In general, clustering techniques depend from the choice of a suitable dissimilarity, where the adjective \emph{suitable} is related to the capability of the dissimilarity to take into account the nature of the data and of their representation space.
In this section, we give a definition of the histogram data and we propose the Wasserstein distance as a suitable metric for comparing them.

\subsection{Histogram Data}
Histogram is a cheap way for the representation of aggregate data or empirical distributions. Indeed, if any model of distribution can be assumed, the distribution of a set of data can be represented by histograms: i.e. a set of contiguous intervals (bins), of equal or of different width, associated with a set of weights (empirical frequencies or densities).

Formally, let $Y$  be a continuous variable defined on a finite support
$S=[min(y);Max(y) ]\subset \Re$. The support $S$ is partitioned into a set of contiguous
intervals (bins) $\{ I_1, \dots, I_h, \dots, I_H \}$, where
$I_h=[a_h;b_h)$ where $min(y)= a_1$ and $Max(y)=b_H$. Each $I_h$ is associated with a weight
$\pi_h$ that represents an empirical (or theoretical) relative frequency.

Let us $E$ be considered as a set of $n$ empirical distributions $F_i(y)$ ($i=1,
\dots, n$). In the case of a histogram description, it is possible to
assume that $S(i)=[min({y}_i);Max({y}_i)]$. Considering a set of
intervals $ I_{hi} = \left[{min({y}_{hi})
,Max({y}_{hi}) } \right) $ such that:
$$
i. \hspace{10pt}  I_{li}  \cap I_{mi}  = \emptyset ;\hspace{5pt}l \ne m \hspace{5pt}; \hspace{1cm}
ii. \hspace{10pt} \bigcup\limits_{s = 1,...,H_i} {I_{si} }  = [min({y}_i);Max({y}_i)] \\
$$
the support can also be written as  $S(i)  = \left\{
{I_{1i},...,I_{ui} ,...,I_{H_ii} } \right\}$. \\
In this paper, we
denote with $f_{i}(y)$ the (empirical) density function
associated with the description $y_i$ and with $F_i(y)$ its
distribution function. It is possible to define the description of the
$i-th$ histogram for the variable $Y$ as:

\begin{equation}
\begin{array}{l}
y_i=\{\left( {I_{1i}} ,\pi _{1i} \right),...,\left(
{I_{ui}} ,\pi _{ui} \right),...,\left( I_{H_i i} ,\pi _{H_i i}
\right)\} \\
such\:that\;
\forall I_{ui} \in S(i)\; \pi _{ui}  =
\int\limits_{I_{ui}} {f_{i} (y)dy}  \ge 0 \;and \;\int\limits_{S(i)} {f _{i} (y)dy}  = 1.
\end{array}
\end{equation}
In the following, we use $y_i$ to denote the description of the $i-th$ histogram in the univariate case.
If we obverse $p$ variables, we denote with $y_{ij}$ (where $i=1,\ldots,n$ and $j=1,\ldots,p$) $i-th$ histogram for the variable $j$.
Thus, considering to the classic data analysis approach, the individual$\times$variable input data table contains in each cell a histogram as represented in Table \ref{TabInput}.
\begin{table}[htbp]
  \centering
  \begin{tabular}{l|ccccc}
    \hline
    Objs. & Var 1 & \dots & Var j & \dots & Var p \\
    \hline
    1       & $y_{11}$     & \dots & $y_{1j}$ & \dots & $y_{1p}$ \\
    \dots   & \dots & \dots & \dots & \dots & \dots \\
    i       & $y_{i1}$= \includegraphics[width=2.0cm, height=1.0cm]{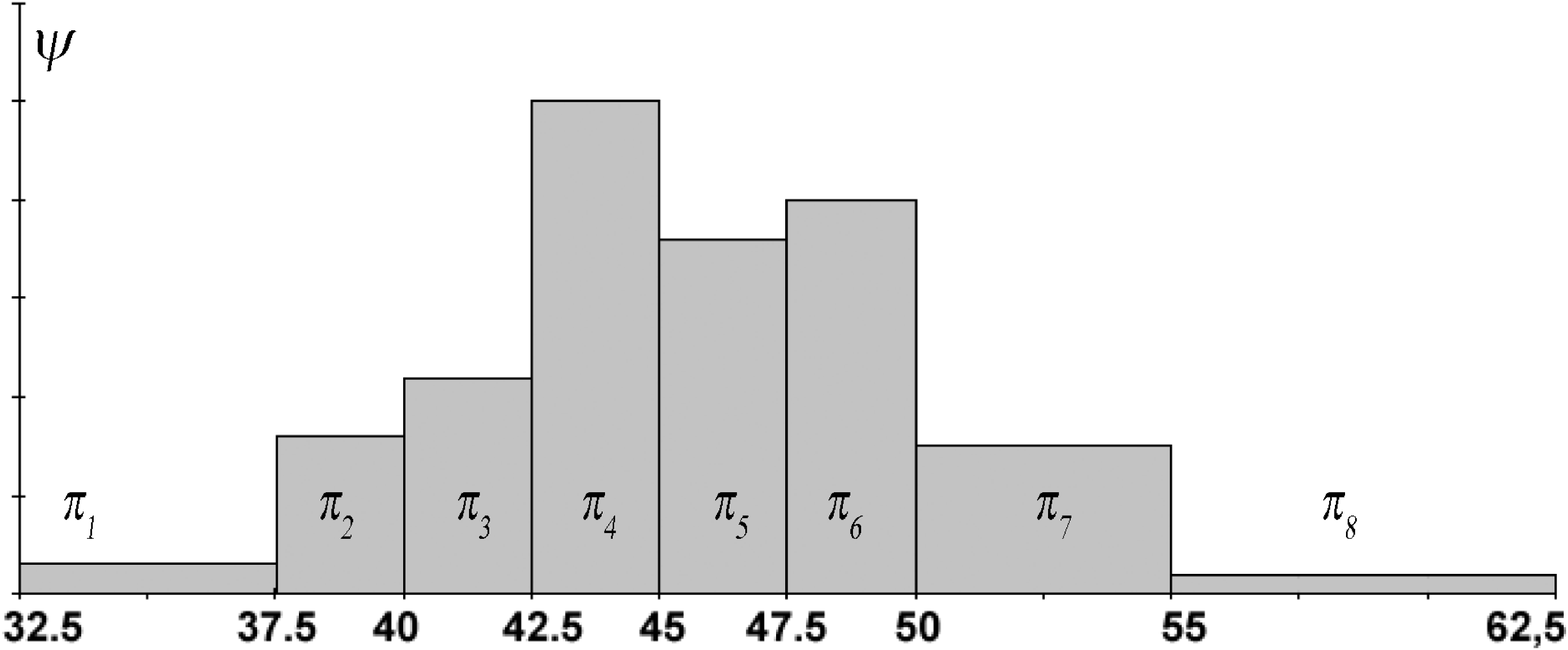}     & \dots & $y_{ij}=$\includegraphics[width=2.0cm, height=2.0cm]{fighist.eps}     & \dots & $y_{ip}=$\includegraphics[width=1.0cm, height=1.5cm]{fighist.eps}  \\
    \dots   & \dots & \dots & \dots & \dots & \dots \\ \\
    n       & $y_{n1}$     & \dots & $y_{nj}$ & \dots & $y_{np}$ \\
    \hline
  \end{tabular}
  \caption{Individual$\times$variable input data table of histogram data}\label{TabInput}
\end{table}

\subsection{The Wasserstein-Kantorovich distance between two histograms}
\label{sec:WKH}
The comparison of histogram data is a particular case of comparison of distribution functions. Several distances and dissimilarities have been presented in the literature. In \cite{VerIRP08did} we introduce several distances that can be used for comparing histogram data. Another good review on distances between distributions can be found in \cite{GiSu02}. The most part of these distances are based on the comparison of densities or frequency associated with a random variable, but the family of distances based on Wasserstein metric permits to obtain interesting interpretative results about the characteristics or the moments of the distributions (see \cite{VerIRP08did} for details).
Wasserstein-Kantorovich metric \cite{GiSu02}\cite{Villani03}, and, in particular the derived
$L^2$-Wasserstein distance is a natural extension of the Euclidean metric to compare distributions.
If $F_i(y)$ and
$F_{i'}(y)$ are the (empirical) distribution functions associated with  $y_i$ and $y_{i'}$,
respectively, and $F^{-1}_i(t)$ and
$F^{-1}_{i'}(t)$ ($t\in [0,1]$) their corresponding quantile functions, the $L^2$-Wasserstein metric is defined as follows:
\begin{equation}\label{HOMp}
    d_{W}(y_i,y_{i'}): =  \sqrt{\int\limits_0^1 {\left( {F_i^{ - 1} (t) - F_{i'}^{ - 1}
(t)} \right)^2dt}}
\end{equation}
This distance is also known as Mallows' \cite{Mall72} distance.
If the corresponding  quantile functions are centered by the respective sample means:
${\bar{F}_i^{-1}(t)=F_i^{-1}(t)-\bar{y}_{i}}$ and ${\bar{F}_{i'}^{-1}(t)=F_{i'}^{-1}(t)-\bar{y}_{i'}}$ ($\forall t\in[0,1]$), the corresponding histogram description are denoted with $y^c_i$ and $y^c_{i'}$ and \citet{CuALB97} proved that
\begin{equation} \label{eq:IrpRoma2} d^{2}_{W}(y_i,y_{i'})=(\bar{y}_{i}-\bar{y}_{i'})^{2}+d^{2}_{W}(y_i^c,y_{i'}^c)
\end{equation}
or, in other words, the (squared) Wasserstein distance between two
distributions $f_i(y)$ and $f_j(y)$ (or random variables), is equal to the
sum of the squared Euclidean distance between their means (the first
moments) and the squared Wasserstein distance between the two
centered random variables. The latter can be considered as a distance
measure of their dispersions, i.e., represent the difference between two distributions except for their location.

\citet{IrpinoR07} and \citet{VeIR08} showed that $L_2$ Wasserstein distance between two generic distributions
$F_i(y)$ and $F_{i'}(y)$ can be decomposed into the following components
(location, size and shape):

\begin{equation} \label{eq:IrpRoma} d^{2}_{W}(y_i, y_{i'})=\underbrace{(\bar{y}_{i}-\bar{y}_{i'})^{2}}_{Location}+\underbrace{\underbrace{(s_{i}-s_{i'})^{2}}_{Size}+\underbrace{2s_{i}s_{i'}\left[1-r_{QQ}(F^{-1}_i,F^{-1}_{i'})\right]}_{Shape}}_{d^{2}_{W}(y_i^c,y_{i'}^c)}
\end{equation}

where: $\bar{y}_i$, $s_i$ and $\bar{y}_{i'}$, $s_{i'}$ are the sample means and standard deviations respectively of $f_i(y)$ and $f_{i'}(y)$.
\begin{equation} \label{eq:rhoQQ}
r_{QQ}(F^{-1}_i,F^{-1}_{i'})=\frac{\int\limits_0^1 (F_i^{ - 1} (t) -
\bar{y}_{i})(F_{i'}^{ - 1} (t)-\bar{y}_{i'})dt} {s_{i}s_{i'}}=\\
\frac{\int\limits_0^1 F_i^{ - 1} (t) F_{i'}^{ - 1} (t)dt
-\bar{y}_{i}\bar{y}_{i'}}{s_{i}s_{i'}}
\end{equation}
is the sample correlation of the quantiles of the two empirical distributions as represented in a classical QQ plot.\\
A computational problem is related to the calculation of $r_{QQ}$, because it needs the computation of the quantile functions. \citet{IrVer06} showed that the the Wasserstein distance between histogram data depends only from the number of bins used for the histogram descriptions, avoiding the computational drawbacks related to the identification of the quantile functions of continuous distributions.

Wasserstein distance can be used for defining an inertia
measure among histograms like for the Euclidean metrics \cite{IrVer06}.
The total inertia with respect to the barycenter $g$ (which is a histogram whose quantiles are the means of the respective quantile of all the distributions) of a set of
$n$ histogram data is given by the following quantity:

\begin{equation}\label{INEDEC}
T=\sum\limits_{i=1}^{n}{d^2_{W}(y_i,g})
=\sum_{i=1}^{n}\int\limits_0^1
{\left( {F^{-1}_{i}(t)  -F^{-1}_{g}(t) } \right)^2dt}=\underbrace{\sum\limits_{k=1}^{K}{\sum\limits_{i \in C_k}{d^2_{W}(y_i,g_k)}}}_{W}+\underbrace{\sum\limits_{k=1}^{K}{|C_k|d^2_W(g_k,g)}}_{B}
.
\end{equation}

i.e.,  $T$ can be decomposed into within (W) and between (B) clusters inertia, according to the Huygen's theorem of decomposition,

where $g_k$ is the barycenter of the $k$-th cluster and $|C_k|$
is the number of objects in $E$ belonging to the cluster $C_k$. We will use this property for defining tools for the interpretation of clustering results.

\subsubsection{Multivariate and adaptive Wasserstein distance}
Given a set $E$ of $n$ objects described by $p$ variables as in Table \ref{TabInput},
each $y_{ij}$ is associated with a (empirical) density function $f_i(y_j)$, a distribution function $F_i(y_j)$ and a quantile function $F^{-1}_{i}(t_j)$. We denote with $\bar{y}_{ij}$ and $s_{ij}$ the sample mean and the sample standard deviation of $f_i(y_j)$.
The individual description of the $i-th$ object is then the following vector $\mathbf{y}_i$:
\begin{equation}\label{object}
    \mathbf{y}_i=[y_{i1},\ldots,y_{ip}].
\end{equation}
\citet{Clark84} present specific formulations for the multivariate Wasserstein distance when the distribution are Gaussians which are equipped with their covariance matrix. For all the other cases \cite{CuALB97}, it is not possible to compute analytically a Wasserstein distance between two multivariate distributions.
Considering that we do not know the joint histograms for each individual
 but only the marginal histograms, we consider the multivariate squared Wasserstein distance as follows:
\begin{equation}\label{dimult}
    d^2_W\left(\mathbf{y}_i,\mathbf{y}_{i'}\right)=\sum\limits_{j=1}^pd^2_W\left(y_{ij}, y_{i'j}\right).
\end{equation}
Following the same observations derived for Eqn. (\ref{eq:IrpRoma2}), we may rewrite the multivariate squared Wasserstein distance as follows:
\begin{equation}\label{multisep}
    d^2_W\left(\mathbf{y}_i,\mathbf{y}_{i'}\right)=\sum\limits_{j=1}^p\left(\bar{y}_{ij}-\bar{y}_{i'j}\right)^2
    +\sum\limits_{j=1}^pd^2_W\left(y^c_{ij}, y^c_{i'j}\right).
\end{equation}

In order to give a different weights to the variables we introduce adaptive distances \cite{DiGov77}, these are distances equipped with a system of weights. Let us consider a vector of weights $\Lambda=[\lambda^1,\ldots,\lambda^p]$ such that $\lambda^j>0$. According to \cite{DiGov77} and \cite{DECAYVES09}, a general formulation for an \emph{Adaptive Single Variable (squared) Wasserstein distance} is as follows:
\begin{equation}\label{multiada}
    d^2_W\left(\mathbf{y}_i,\mathbf{y}_{i'}|\Lambda\right)=\sum\limits_{j=1}^p\lambda^jd^2_W\left(y_{ij}, y_{i'j}\right).
\end{equation}
In this formulation, the weights induce a linear transformation of the original space. Several approaches of these types have been proposed (see for examples \cite{DECSOUZ07}, \cite{DECLEC09}, \cite{DeCDeS10}), where the weights are associated to the whole set of data or where the weights are chosen locally for each cluster in which is partitioned the set of data. If the distance is decomposable in several components, it is possible to introduce a suitable system of weights for such components. While the choice of weighting each variable is easily extendible to the histogram data, the choice of weighting components needs to be proven for such kind of data. In the present paper, starting from the decomposition of the squared Wasserstein distance as shown in Eqs. \ref{eq:IrpRoma2} and \ref{eq:IrpRoma}, we propose two schemas of weighting systems for the definition of two adaptive squared Wasserstein distances based on the two components: the first denoted as \emph{Globally Component-wise Adaptive Wassertein Distance} (GC-AWD), while the second is denoted as \emph{Cluster Dependent Component-wise Adaptive Wassertein Distance} (CDC-AWD).

\section{The Dynamic Clustering Algorithm} \label{sec:DCA}
The Dynamic Clustering Algorithm (DCA) \cite{Did71}\cite{DiSIM76} is here proposed as a method to partition a set of
data described by distributions. We recall that DCA is based on the
definition of a criterion of the best fitting between the partition
of a set of individuals and the representation of the clusters of
the partition. The algorithm simultaneously looks for the best
partition into $K$ clusters and their best representation. Thus, the
DCA needs the definition of a proximity function to assign the
individuals to the clusters and the definition of a way to represent the clusters.

The choice of the representative elements of the clusters (\emph{prototype}) is done according to the dissimilarity function used in the
algorithm to allocate the elements to the clusters, such that a
criterion of internal homogeneity is minimized. The consistence
between the representation and the allocation function guarantees
the convergence of the algorithm to a stationary value of the
criterion.

According to the nature of the data, we propose to base the adequacy criterion for the dynamic clustering algorithm on the Wasserstein distance between histogram data. We introduce two schemas of Dynamic Clustering algorithms based on two proposals of adaptive (squared) Wasserstein distances. In these algorithms, we propose two sets of weights computed on the whole set $E$ for defining the adequacy criterion on a \emph{Globally Component-wise Adaptive Wassertein Distance} (GC-AWD) and on a \emph{Cluster Dependent Component-wise Adaptive Wassertein Distance} (CDC-AWD).

\subsection{Adequacy criterion based on standard and adaptive distances}
\label{sec:ADCA}
Let us consider a set $E$ of $n$ objects described by $p$ histogram variables. The individual description of the $i-th$ object is $\mathbf{y}_i=[y_{i1},\ldots,y_{ip}]$, where $y_{ij}$ is the histogram description of the $i-th$ individual for the $j-th$ variable.
We assume that the prototype of the cluster $C_k\;(k=1,\ldots,K)$ is also represented by a vector $\mathbf{g}_k=(g_{k1},\ldots,g_{kp})$, where $g_{kj}$ is a histogram.
 As in the standard adaptive dynamic cluster algorithm, the proposed methods look for the partition $P=(C_1,\ldots,C_k)$ of $E$ in $K$ classes, its corresponding set of $K$  prototypes $\mathbf{G}=(\mathbf{g}_1,\ldots,\mathbf{g}_K)$ and a set of $K$ different adaptive distances $d=(d_1,\ldots,d_K)$ depending on a set $\mathbf{\Lambda}$ of positive weights associated with the clusters, such that the following adequacy criterion of the best fitting between the clusters and their representation is locally minimized:

\begin{equation}\label{gencrit}
    \Delta(\mathbf{G},\mathbf{\Lambda},P)=\sum\limits_{k=1}^K\sum\limits_{i\in C_k}d(\mathbf{y}_i,\mathbf{g}_k|\mathbf{\Lambda}).
\end{equation}

As in the standard adaptive dynamic cluster algorithm, we perform a representation step where the prototypes are updated according to the cluster structure of the data. This is followed by a weighting step, allowing the definition of the weight to be associated with each variable (or component) in the definition of the adaptive dissimilarity. Subsequently, an allocation step assigns the individuals to classes according to their proximity to the class prototypes. The three steps are repeated until convergence of the algorithm is achieved, i.e., until the adequacy criterion reaches a stationary value.\\

The adequacy criteria to be minimized are based on the following distances:

\begin{description}
\item[\textbf{STANDARD}] - \emph{Standard Wasserstein distance.}
 The standard (squared) Wasserstein distance
between the histogram $y_i$ and of the prototype $g_k$ is defined as:

\begin{equation}\label{STD_W}
        d(\mathbf{y}_i,\mathbf{g}_k)=\sum^{p}_{j=1} d^{2}_{W}(y_{ij},g_{kj}).
\end{equation}

In this case, the general criterion becomes:

\begin{equation}\label{critst}
    \Delta(\mathbf{G},P)=\sum\limits_{k=1}^K\sum\limits_{i\in C_k}d^2_W(\mathbf{y}_i,\mathbf{g}_k)
\end{equation}

as no system of weights is defined.

\item[\textbf{GC-AWD}] - \emph{Globally Component-wise Adaptive Wassertein Distance.} The distances depends from two vector $\mathbf{\Lambda}_{\bar y}$ and $\mathbf{\Lambda}_{Disp}$ of coefficients that assign weights for each component of each variable:%
   \begin{eqnarray}
      \mathbf{\Lambda}_{{\bar y}}&=&(\lambda^{1}_{\bar{y}},\ldots,\lambda^{p}_{\bar{y}})\\
    \mathbf{\Lambda}_{Disp}&=&(\lambda^{1}_{Disp},\ldots,\lambda^{p}_{Disp}).
   \end{eqnarray}
We define the two-component global adaptive Wasserstein distance
between the description $\mathbf{y}_i$ and prototype $\mathbf{g}_k$ as:
\begin{equation}\label{AD_2}
    d(\mathbf{y}_{i},\mathbf{g}_{k}|\mathbf{\Lambda})=\sum\limits^{p}_{j=1}\lambda^{j}_{{\bar y}}({\bar y}_{ij}-{\bar y}_{g_{kj}})^{2}+ \sum\limits^{p}_{j=1}\lambda^{j}_{Disp}d^2_W(y^c_{ij},g^c_{kj})
\end{equation}
where $y^c_{ij}$ and $g^c_{kj}$ are the centered description of $y_{ij}$ and of $g_{kj}$, as presented in Eqn. (\ref{eq:IrpRoma2}). In this case, being
\begin{equation}\label{LLK2}
 \mathbf{\Lambda}=\left[ {\begin{array}{*{20}c}
   \mathbf{\Lambda}_{{\bar y}}=(\lambda^{1}_{{\bar y}},\ldots,\lambda^{p}_{{\bar y}})  \\
   \mathbf{\Lambda}_{Disp}=(\lambda^{1}_{Disp},\ldots,\lambda^{p}_{Disp})
\end{array}} \right]
\end{equation}
the general criterion is:
\begin{equation}\label{critA2}
   \Delta(\mathbf{G},\mathbf{\Lambda},P)=\sum\limits_{k=1}^K\sum\limits_{i\in C_k}\sum\limits^{p}_{j=1}{\lambda^{j}_{{\bar y}}({\bar y}_{ij}-{\bar y}_{g_{kj}})^{2}+\sum\limits_{k=1}^K\sum\limits_{i\in C_k}\sum\limits^{p}_{j=1}\lambda^{j}_{Disp}d^2_W(y^c_{ij},g^c_{kj})}.
    \end{equation}

\item[\textbf{CDC-AWD}] - \emph{Cluster Dependent Component-wise Adaptive Wassertein Distance.} The distances depends from $K$ couples of vectors $\mathbf{\Lambda}_{k,{\bar y}}$ and $\mathbf{\Lambda}_{k,Disp}$ of coefficients that assign a weight for each component of the variable in each cluster.
 For each cluster we define:
    \[
\mathbf{\Lambda}_{k,{\bar y}}=(\lambda^{1}_{k,{\bar y}},\ldots,\lambda^{p}_{k,{\bar y}})\;\;\;
\mathbf{\Lambda}_{k,Disp}=(\lambda^{1}_{k,Disp},\ldots,\lambda^{p}_{k,Disp}).
\]
We define the two-component cluster-dependent adaptive Wasserstein distance
between the description $\mathbf{y}_i$ and prototype $\mathbf{g}_k$ as:
\begin{equation}\label{AD_4}
    d(\mathbf{y}_{i},\mathbf{g}_{k}|\mathbf{\Lambda})=\sum\limits^{p}_{j=1}
    \lambda^{j}_{k,{\bar y}}({\bar y}_{ij}-{\bar y}_{g_{kj}})^{2}+\sum\limits^{p}_{j=1}\lambda^{j}_{k,Disp}d^2_W(y^c_{ij},g^c_{kj})
\end{equation}
where $y^c_{ij}$ and $g^c_{kj}$ are the centered description of $y_{ij}$ and of $g_{kj}$, as presented in Eqn. (\ref{eq:IrpRoma2}). In this case, being
\begin{equation}\label{LL3K2}
 \mathbf{\Lambda}=\left[ {\begin{array}{*{20}c}
   \mathbf{\Lambda}_{k,{\bar y}}=\left(\lambda^{1}_{k,{\bar y}},\ldots,\lambda^{p}_{k,{\bar y}}\right)  \\
   \mathbf{\Lambda}_{k,Disp}=\left(\lambda^{1}_{k,Disp},\ldots,\lambda^{p}_{k,Disp}\right)
\end{array}} \right] \forall k\in\{1,\ldots,K\}
\end{equation}
the general criterion becomes:
\begin{equation}\label{critA2}\begin{array}{l}
   \Delta(\mathbf{G},\mathbf{\Lambda},P)=\sum\limits_{k=1}^K\sum\limits_{i\in C_k}\sum\limits^{p}_{j=1}{\lambda^{j}_{k,{\bar y}}({\bar y}_{ij}-{\bar y}_{g_{kj}})^{2}+} \\
    {+\sum\limits_{k=1}^K\sum\limits_{i\in C_k}\sum\limits^{p}_{j=1}\lambda^{j}_{k,Disp}d^2_W(y^c_{ij},g^c_{kj})}.
    \end{array}
    \end{equation}
\end{description}
From an initial solution for $(\mathbf{G}^0,\mathbf{\Lambda}^0,P^0)$, the dynamic clustering algorithm based on adaptive distances alternates three steps until the criterion $\Delta$ reach a stationary point.
\subsubsection{Step 1: definition of the prototypes}
In the first step of the algorithm, the partition $P$ of $E$ in $K$ clusters and the corresponding weights $\mathbf{\Lambda}$ are fixed.
\begin{proposition}\label{prop01}
Chosen one of the distance functions (Eqs. \ref{STD_W},  \ref{AD_2} or \ref{AD_4}), the vector of prototypes $\mathbf{G}=(\mathbf{g}_1,\ldots,\mathbf{g}_k)$, where $\mathbf{g}_k=(g_{k1},\ldots,g_{kp})$, which minimizes the criterion $\Delta$, is calculated by means of the quantile functions associated with each $g_{kj}$. $g_{kj}$ is represented by a histogram with associated the following quantile function:

\begin{equation}\label{barycenter}
F^{-1}_{g_{k}}(t_j)=\frac{1}{n_{k}}\sum\limits_{i\in C_k}F^{-1}_{i}(t_j)\;\;\;\forall t_j\in[0, 1]
\end{equation}
where $n_k$ is the cardinality of cluster $C_k$. It is the means of the quantile functions of the histogram representations of the elements belonging to the cluster $C_k$ and it can also be written as:
\begin{equation}\label{barycenterC}
F^{-1c}_{g_{k}}(t_j)+{\bar y}_{g_{kj}}=\frac{1}{n_{k}}\sum\limits_{i\in C_k}F^{-1c}_{i}(t_j)+\frac{1}{n_{k}}\sum\limits_{i\in C_k}{\bar y}_{ij}\;\;\;\forall t_j\in[0, 1]
\end{equation}
where $F^{-1c}_{g_{k}}(t_j)$ (resp. $F^{-1c}_{i}(t_j)$) is associated with the centered description $g^c_{kj}$  (resp. $y^c_{ij}$) of $g_{kj}$(resp. $y_{ij}$) and ${\bar y}_{g_{kj}}$  (resp. ${\bar y}_{ij}$) is the mean of the description of $g_{kj}$ (resp. $y_{ij}$).
\end{proposition}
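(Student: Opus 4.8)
The plan is to exploit the additive, separable structure of the criterion $\Delta$ together with the mean/dispersion decomposition of the squared Wasserstein distance given in Eqn. (\ref{eq:IrpRoma2}). First I would observe that, with $P$ and $\mathbf{\Lambda}$ held fixed, $\Delta(\mathbf{G},\mathbf{\Lambda},P)$ is a sum of non-negative terms in which the prototype $\mathbf{g}_k$ of cluster $C_k$ appears only through the inner sum $\sum_{i\in C_k} d(\mathbf{y}_i,\mathbf{g}_k|\mathbf{\Lambda})$, and within that sum each component $g_{kj}$ appears only through its own variable $j$. Hence the global minimization decouples into $K\times p$ independent problems, one for each pair $(k,j)$, and it suffices to minimize, for each such pair, a quantity of the form
\begin{equation*}
\lambda_{\bar y}\sum_{i\in C_k}(\bar y_{ij}-\bar y_{g_{kj}})^2 + \lambda_{Disp}\sum_{i\in C_k} d^2_W(y^c_{ij},g^c_{kj}),
\end{equation*}
where $\lambda_{\bar y},\lambda_{Disp}>0$ are the fixed weights attached to the two components; the STANDARD case is recovered by setting both equal to $1$, so that the three distances (\ref{STD_W}), (\ref{AD_2}), (\ref{AD_4}) are all handled uniformly in this way.

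Next I would use the fact that the two summands depend on disjoint parts of the prototype: the first only on the mean $\bar y_{g_{kj}}$, and the second, through Eqn. (\ref{eq:IrpRoma2}) and the definition (\ref{HOMp}), only on the centered quantile function $F^{-1c}_{g_k}(t_j)$. Since the weights are strictly positive constants they do not affect the location of either minimizer, so the two components may be optimized separately. The mean part is an ordinary least-squares problem whose unique minimizer is the arithmetic mean $\bar y_{g_{kj}}=\frac{1}{n_k}\sum_{i\in C_k}\bar y_{ij}$. For the dispersion part I would write $d^2_W(y^c_{ij},g^c_{kj})=\int_0^1\big(F^{-1c}_i(t_j)-F^{-1c}_{g_k}(t_j)\big)^2\,dt_j$, interchange the finite sum with the integral, and minimize the integrand pointwise in $t_j$; for each fixed $t_j$ this is once more a least-squares problem, solved by the average $F^{-1c}_{g_k}(t_j)=\frac{1}{n_k}\sum_{i\in C_k}F^{-1c}_i(t_j)$.

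Finally I would recombine the two parts: adding the optimal mean back to the optimal centered quantile function gives $F^{-1}_{g_k}(t_j)=F^{-1c}_{g_k}(t_j)+\bar y_{g_{kj}}=\frac{1}{n_k}\sum_{i\in C_k}F^{-1c}_i(t_j)+\frac{1}{n_k}\sum_{i\in C_k}\bar y_{ij}$, which is exactly (\ref{barycenterC}) and collapses to (\ref{barycenter}). The step I expect to require genuine care, rather than routine calculus, is justifying the pointwise minimization: one must verify that the unconstrained minimizer actually lies in the admissible set of prototypes. This holds because a quantile function is non-decreasing and the pointwise average of non-decreasing functions is again non-decreasing, so the set of (centered) quantile functions is convex and closed under averaging; the $L^2$ barycenter therefore stays a valid quantile function and no monotonicity constraint is active. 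Strict convexity of the squared $L^2$ objective further ensures this minimizer is unique.
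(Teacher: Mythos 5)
Your proof follows essentially the same route as the paper's: decouple the criterion additively over clusters, variables and components, apply the mean/dispersion decomposition of Eqn.~(\ref{eq:IrpRoma2}), solve the two resulting least-squares problems (the mean by the arithmetic average, the centered quantile function pointwise in $t_j$), and recombine into Eqs.~(\ref{barycenter}) and (\ref{barycenterC}). Your two additions --- noting that the strictly positive weights $\lambda_{\bar y},\lambda_{Disp}$ cannot move either minimizer, and checking that the pointwise average of non-decreasing centered quantile functions is again an admissible (zero-mean, non-decreasing) quantile function --- are correct refinements of steps the paper's proof leaves implicit.
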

\begin{proof}
Beeing $\Delta$ an additive criterion (for the variables, the components and the clusters), according to \cite{CuALB97}, we can write:
$$\begin{array}{l}
min\left(\phi(g_{kj})\right)=
\underbrace{min\left\{\sum\limits_{i \in C_k}\int\limits_0^1\left(F^{-1}_{i}(t_j)-F^{-1}_{g_k}(t_j)\right)^2dt_j\right\}}_{I}=\\
=
min\left\{\sum\limits_{i \in C_k}\left({\bar y}_{ij}-{\bar y}_{g_{kj}}\right)^2+\sum\limits_{i \in C_k}\int\limits_0^1\left({F}^{-1c}_{i}(t_j)-{F}^{-1c}_{g_k}(t_j)
\right)^2dt_j\right\}=\\
=\underbrace{min\left\{\sum\limits_{i \in C_k}\left({\bar y}_{ij}-{\bar y}_{g_{kj}}\right)^2\right\}}_{II}
+\underbrace{min\left\{\sum\limits_{i \in C_k}\int\limits_0^1\left({F}^{-1c}_{i}(t_j)-{F}^{-1c}_{g_k}(t_j)
\right)^2dt_j\right\}}_{III}
\end{array}$$
where ${F}^{-1c}_{i}(t_j)=F^{-1}_{i}(t_j)-{\bar y}_{ij}$ and ${F}^{-1c}_{g_k}(t_j)=F^{-1}_{g_k}(t_j)-{\bar y}_{g_{kj}}$.
Problem $II$ is minimized when:
$${\bar y}_{g_{kj}}=\frac{1}{n_k}\sum\limits_{i \in C_k}{\bar y}_{ij}\;\;\;;$$
while problem $III$ is minimized when, for each $t_j\in[0,1]$:
$${F}^{-1c}_{g_k}(t_j)=\frac{1}{n_k}\sum\limits_{i \in C_k}{F}^{-1c}_{i}(t_j).$$
Then, problem $I$ is minimized when the barycenter (\emph{prototype}) histogram $g_{kj}$ is a histogram whose quantile function is:
\begin{equation}\label{bary}
    F^{-1}_{g_k}(t_j)={F}^{-1c}_{g_k}(t_j)+{\bar y}_{g_{kj}}\;\;\;\forall\; t_j \in [0,1].
\end{equation}
\end{proof}

\subsubsection{Step 2: definition of the best distances}
In the second step, the partition $P$ of $E$ and the corresponding vector $\mathbf{G}$ of the prototypes are fixed. It is worth noting that the Dynamic Clustering Algorithm based on the standard (squared) Wasserstein distances does not require this step.
\begin{proposition}\label{prop02}
According to \citet{DiGov77}, the $\mathbf{\Lambda}$ system of weights, useful for defining the adaptive distances that minimize the criterion $\Delta$, is calculated accordingly to the different schemes of adaptive distance functions used for the definition of the adequacy criterion of the algorithm.
\begin{description}

\item[\textbf{GC-AWD}]If the distance function is given by Eqn. (\ref{AD_2}), the $\mathbf{\Lambda}$ system of weights that minimize the criterion $\Delta$ is

\begin{equation}\label{LL3K}
 \mathbf{\Lambda}=\left[ {\begin{array}{*{20}c}
   \mathbf{\Lambda}_{{\bar y}}=(\lambda^{1}_{{\bar y}},\ldots,\lambda^{p}_{{\bar y}})  \\
   \mathbf{\Lambda}_{Disp}=(\lambda^{1}_{Disp},\ldots,\lambda^{p}_{Disp})
\end{array}} \right]
\end{equation}
under the following restrictions:
\begin{enumerate}
    \item $\lambda^{j}_{\bar y}>0$ and $\lambda^{j}_{Disp}>0$
    \item $\prod^{p}_{j=1}\lambda^{j}_{\bar y}=1$ and $\prod^{p}_{j=1}\lambda^{j}_{Disp}=1$.
\end{enumerate}

The coefficients of $\mathbf{\Lambda}_{{\bar y}}$ and $\mathbf{\Lambda}_{Disp}$, which satisfy restrictions $(1)$ and $(2)$ and minimize the criterion $\Delta$, are calculated as:

\begin{eqnarray}
 \lambda^{j}_{{\bar y}}&=&\frac{\left[\prod^{p}_{h=1}\sum\limits_{k=1}^K \sum\limits_{i \in C_k}({\bar y}_{ih}-{\bar y}_{g_{kh}})^{2}\right]^{\frac{1}{p}}}{\sum\limits_{k=1}^K \sum\limits_{i \in C_k}({\bar y}_{ij}-{\bar y}_{g_{kj}})^{2}},\label{eq:lambda2_1}\\
 \lambda^{j}_{Disp}&=&\frac{
\left[\prod^{p}_{h=1}\left(\sum\limits_{k=1}^K{\sum\limits_{i \in C_k}d^{2}_{W}(y^c_{ih} ,g^c_{kh})}\right)\right]^{\frac{1}{p}}
}
{
\sum\limits_{k=1}^K \sum\limits_{i \in C_k}d^{2}_{W}(y^c_{ij} ,g^c_{kj})
}.\label{eq:lambda2_2}
\end{eqnarray}

Note that the closer are the objects to the prototypes of the clusters for the component related to the mean (resp. dispersion) the higher is the respective weight.

\item[\textbf{CDC-AWD}]If the distance function is given by Eqn. (\ref{AD_4}), the $\mathbf{\Lambda}$ system of weights that minimize the criterion $\Delta$ is:
    \begin{equation}\label{LL4}
\mathbf{ \Lambda}=\left[ {\begin{array}{*{20}cc}
\mathbf{ \Lambda}_{1,{\bar y}} &\mathbf{\Lambda}_{1,Disp} \\
\ldots&\ldots\\
  \mathbf{ \Lambda}_{k,{\bar y}} &\mathbf{\Lambda}_{k,Disp} \\
  \ldots&\ldots\\
   \mathbf{ \Lambda}_{K,{\bar y}} &\mathbf{\Lambda}_{K,Disp}
   \end{array}}\right]
\end{equation}

where, for each cluster, $k=1,\ldots,K$:
    \[
\mathbf{\Lambda}_{k,{\bar y}}=(\lambda^{1}_{k,{\bar y}},\ldots,\lambda^{p}_{k,{\bar y}})\;\;\;
\mathbf{\Lambda}_{k,Disp}=(\lambda^{1}_{k,Disp},\ldots,\lambda^{p}_{k,Disp})
\]
under the following restrictions for each cluster $k=1,\ldots,K$:
\begin{enumerate}
    \item $\lambda^{j}_{k,{\bar y}}>0$ and $\lambda^{j}_{k,Disp}>0$
    \item $\prod^{p}_{j=1}\lambda^{j}_{k,{\bar y}}=1$ and $\prod^{p}_{j=1}\lambda^{j}_{k,Disp}=1.$
\end{enumerate}
The coefficients $\lambda^{j}_{k,{\bar y}}$ and $\lambda^{j}_{k,Disp}$, which satisfy restrictions $(1)$ and $(2)$ and minimize the criterion $\Delta$, are:
\begin{eqnarray}
  \lambda^{j}_{k,{\bar y}}& = &\frac{[\prod^{p}_{h=1}\sum_{i\in C_k}({\bar y}_{ih}-{\bar y}_{g_{kh}})^{2}]^{\frac{1}{p}}}{\sum_{i\in C_k}({\bar y}_{ij}-{\bar y}_{g_{kj}})^{2}}, \;\;\mathrm{and} \label{lambda4_1}\\
  \lambda^{j}_{k,Disp}&=&\frac{\left[\prod^{p}_{h=1}\sum_{i\in C_k}d_W^2(y^c_{ij},g^c_{kj})\right]^{\frac{1}{p}}}
  {\sum_{i\in C_k}d_W^2(y^c_{ij},g^c_{kj})}.\label{lambda4_2}
\end{eqnarray}
Note that the closer to the prototypes of a given cluster $C_k$ are the objects for the component related to the mean (resp. dispersion) the higher is the respective weight for the cluster $C_k$.
\end{description}
\end{proposition}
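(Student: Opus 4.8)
The plan is to exploit the additive, separable structure of the criterion $\Delta$ together with the product constraints, reducing the problem to a standard constrained minimization that can be solved in closed form by Lagrange multipliers (or, equivalently, by the arithmetic--geometric mean inequality).

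First I would observe that in the GC-AWD criterion (\ref{critA2}) the location weights $\mathbf{\Lambda}_{\bar y}$ enter only the first double sum and the dispersion weights $\mathbf{\Lambda}_{Disp}$ enter only the second, while the two constraints $\prod_j \lambda^j_{\bar y}=1$ and $\prod_j \lambda^j_{Disp}=1$ are likewise decoupled. Hence the minimization splits into two independent problems of identical form. Writing $A_j=\sum_k\sum_{i\in C_k}(\bar y_{ij}-\bar y_{g_{kj}})^2$ for the aggregated location spread along variable $j$, the first problem reads: minimize $\sum_{j=1}^p \lambda^j_{\bar y}A_j$ over positive $\lambda^j_{\bar y}$ subject to $\prod_j \lambda^j_{\bar y}=1$; the dispersion problem is the same with $A_j$ replaced by $B_j=\sum_k\sum_{i\in C_k}d^2_W(y^c_{ij},g^c_{kj})$.

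Next I would solve this canonical subproblem. Forming the Lagrangian $L=\sum_j \lambda^j A_j-\mu(\prod_j \lambda^j-1)$ and setting $\partial L/\partial \lambda^j=0$ gives $A_j\lambda^j=\mu$ for every $j$ (using $\prod_j \lambda^j=1$), so $\lambda^j=\mu/A_j$; imposing the product constraint yields $\mu=(\prod_h A_h)^{1/p}$, hence $\lambda^j=(\prod_h A_h)^{1/p}/A_j$, which is precisely (\ref{eq:lambda2_1})--(\ref{eq:lambda2_2}). To confirm that this stationary point is the global minimum rather than merely a critical point, I would invoke AM--GM: for any feasible weights, $\sum_j \lambda^j A_j\ge p(\prod_j \lambda^j A_j)^{1/p}=p(\prod_j A_j)^{1/p}$, with equality iff all products $\lambda^j A_j$ coincide, which is exactly the computed solution. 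This bound sidesteps any second-order check and also shows the infimum is attained in the interior.

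Finally, the CDC-AWD case (\ref{AD_4}) follows by the same argument applied cluster by cluster: its criterion is additive over $k$, and the weights $\mathbf{\Lambda}_{k,\cdot}$ for distinct clusters are constrained independently, so each cluster yields its own pair of canonical subproblems with the inner sums ranging only over $i\in C_k$, producing (\ref{lambda4_1})--(\ref{lambda4_2}). The optimization itself is routine; the one delicate point I would flag is the implicit nondegeneracy assumption that each aggregated quantity $A_j$ (resp. $B_j$, and their cluster-wise analogues) is strictly positive, so that the weights are well defined. A vanishing denominator would signal a variable or component perfectly matched by the prototypes, for which the adaptive weight is undefined; I expect this, rather than the minimization, to be the substantive issue worth stating explicitly.
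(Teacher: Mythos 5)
Your proposal is correct and is essentially the route the paper intends: the paper's own proof is nothing more than a pointer to the Lagrange-multiplier method of \citet{DiGov77} (deferring details to \cite{DECAYVES09} and \cite{DeCDeS10}), and your reduction of the criterion to decoupled canonical subproblems --- minimize $\sum_j \lambda^j A_j$ subject to $\prod_j \lambda^j=1$, solved by stationarity to give $\lambda^j=(\prod_h A_h)^{1/p}/A_j$ --- is exactly that method carried out in full, both globally and cluster by cluster. Two of your additions genuinely go beyond what the paper records: the AM--GM bound, which certifies that the stationary point is the \emph{global} minimum attained in the interior of the feasible set (Lagrange stationarity alone does not establish this, and the paper is silent on it), and the explicit nondegeneracy condition that each $A_j$ and $B_j$ (and their cluster-wise analogues over $i\in C_k$) be strictly positive, without which the weights in Eqs. (\ref{eq:lambda2_1})--(\ref{lambda4_2}) are undefined; this is a real caveat, particularly for CDC-AWD where a small cluster can easily have a component perfectly matched by its prototype. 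Incidentally, your derivation also produces the corrected form of Eqn. (\ref{lambda4_2}): the summand inside the product over $h$ should carry the index $h$ rather than $j$, a typo in the paper's statement that your argument implicitly fixes.
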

\begin{proof}
The proof of the proposition can be performed according to \cite{DiGov77} using the Lagrange multipliers method for the minimization of the criterion. For the sake of brevity, and without loss of generality, see \cite{DECAYVES09} and \cite{DeCDeS10}.
\end{proof}

\subsubsection{Step 3: definition of the best partition}
In this step, prototype $\mathbf{G}$ and the $\mathbf{\Lambda}$  are fixed.
\begin{proposition}\label{prop03}
The partition $P=(C_1,\ldots,C_K)$, which minimizes the criterion $\Delta$, consists of clusters $C_k\;(k=1,\ldots,K)$ identified according to the following allocation rules:
\begin{description}
\item[\textbf{STANDARD}]
\begin{equation}\label{ass1}
C_k=\left\{i\in E| d(\mathbf{y}_i, \mathbf{g}_k) <d(\mathbf{y}_i, \mathbf{g}_m) \right\}\;\;\forall m \neq k \;(m=1,\ldots,K).
\end{equation}
\item[\textbf{GC-AWD} and \textbf{CDC-AWD}]
\begin{equation}\label{ass2}
C_k=\left\{i\in E| d(\mathbf{y}_i, \mathbf{g}_k|\mathbf{\Lambda}) <d(\mathbf{y}_i, \mathbf{g}_m|\mathbf{\Lambda}) \right\}\;\;\forall m \neq k \;(m=1,\ldots,K).
\end{equation}
\end{description}

In general, when $d(\mathbf{y}_i, \mathbf{g}_k) =d(\mathbf{y}_i, \mathbf{g}_m)$ or $d(\mathbf{y}_i, \mathbf{g}_k|\mathbf{\Lambda}) =d(\mathbf{y}_i, \mathbf{g}_m|\mathbf{\Lambda})$, then $i\in C_k$ if $k<m$.
\end{proposition}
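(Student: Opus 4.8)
The plan is to exploit the fact that, once the prototype vector $\mathbf{G}$ and the weight system $\mathbf{\Lambda}$ are held fixed, the criterion $\Delta$ is additive over the individuals. First I would rewrite the double sum as a single sum indexed by $E$: since any partition $P=(C_1,\ldots,C_K)$ amounts to an assignment map that sends each $i\in E$ to the index $k(i)$ of its cluster, I can write
\begin{equation}
\Delta(\mathbf{G},\mathbf{\Lambda},P)=\sum_{k=1}^K\sum_{i\in C_k}d(\mathbf{y}_i,\mathbf{g}_k|\mathbf{\Lambda})=\sum_{i\in E}d(\mathbf{y}_i,\mathbf{g}_{k(i)}|\mathbf{\Lambda}),
\end{equation}
and analogously $\Delta(\mathbf{G},P)=\sum_{i\in E}d(\mathbf{y}_i,\mathbf{g}_{k(i)})$ in the STANDARD case. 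In this form each individual contributes one non-negative term that depends only on the cluster to which it is allocated, and the terms for distinct individuals are mutually independent.

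The key step is the elementary observation that a sum of non-negative terms, each governed by an independent choice, is minimized by minimizing every term separately. Hence I would choose, for each $i\in E$ independently, an index $k(i)$ attaining $\min_{m\in\{1,\ldots,K\}}d(\mathbf{y}_i,\mathbf{g}_m|\mathbf{\Lambda})$ (resp. $\min_m d(\mathbf{y}_i,\mathbf{g}_m)$). This is exactly the nearest-prototype rule: $i$ is placed in $C_k$ whenever its distance to $\mathbf{g}_k$ is strictly smaller than its distance to every other prototype, which is the content of the allocation rules (\ref{ass1}) and (\ref{ass2}). To verify minimality formally, for any competing partition $P'$ with assignment $k'(\cdot)$ one has $d(\mathbf{y}_i,\mathbf{g}_{k'(i)}|\mathbf{\Lambda})\ge d(\mathbf{y}_i,\mathbf{g}_{k(i)}|\mathbf{\Lambda})$ for each $i$ by definition of the minimum, and summing over $i$ gives $\Delta(\mathbf{G},\mathbf{\Lambda},P')\ge\Delta(\mathbf{G},\mathbf{\Lambda},P)$.

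The only point needing care --- and the main, modest, obstacle --- is well-definedness of the partition when two or more prototypes are equidistant from some individual. The strict inequalities in (\ref{ass1}) and (\ref{ass2}) would leave such an individual unassigned or assigned to several clusters at once; I would invoke the stated tie-breaking convention, namely assigning $i$ to $C_k$ for the smallest index $k$ that achieves the minimum. This selects exactly one minimizing cluster, so the resulting $C_1,\ldots,C_K$ are pairwise disjoint and cover $E$, and thus form a genuine partition, while leaving the value of $\Delta$ unchanged because all the competing terms are equal. Apart from this bookkeeping, the argument is immediate from the separability of $\Delta$ over individuals and requires no property of the Wasserstein distance beyond non-negativity.
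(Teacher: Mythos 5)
Your proof is correct and matches the paper's intent exactly: the paper simply declares the proof of Proposition~\ref{prop03} ``straightforward,'' and the straightforward argument is precisely yours --- with $\mathbf{G}$ and $\mathbf{\Lambda}$ fixed (including the cluster-specific weights $\mathbf{\Lambda}_k$ in the CDC-AWD case), $\Delta$ separates into independent non-negative terms over individuals, so the nearest-prototype rule minimizes each term, and the tie-breaking convention $k<m$ makes the clusters disjoint and exhaustive. Your handling of ties is, if anything, more careful than the paper's.
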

\begin{proof}
The proof of Proposition \ref{prop03} is straightforward.
\end{proof}

\subsection{Properties of the algorithm}
According to \cite{DiSIM76}, the property of convergence of the algorithm can be studied from two series: $z_t=(\mathbf{G}^t,\mathbf{\Lambda}^t,P^t)$ and $ss_t=\Delta(z_t)$, $t=0,1,\ldots$\\
From an initial term $z_0=(\mathbf{G}^0,\mathbf{\Lambda}^0,P^0)$, the algorithm computes the different terms of the series $z_t$ until convergence, at which point the criterion $\Delta$ achieves a stationary value.
Here, we show the convergence of algorithms in terms of configuration of partition, prototypes and criterion $\Delta(\mathbf{G},\mathbf{\Lambda},P)$.
\begin{proposition}\label{prop5}
 The series $ss_t=\Delta(z_t)$ decreases at each iteration and converges.
\end{proposition}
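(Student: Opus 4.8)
The plan is to establish convergence by the standard two-series argument for dynamic clustering algorithms (as invoked via \cite{DiSIM76}), showing that the sequence $ss_t = \Delta(z_t)$ is nonincreasing and bounded below, hence convergent. The key observation is that each of the three steps of the algorithm solves an exact minimization subproblem of $\Delta$ with the other two arguments held fixed, so none of the steps can increase the criterion. Concretely, I would fix attention on one full iteration carrying the state from $z_t=(\mathbf{G}^t,\mathbf{\Lambda}^t,P^t)$ to $z_{t+1}=(\mathbf{G}^{t+1},\mathbf{\Lambda}^{t+1},P^{t+1})$ and bound $\Delta$ after each of the three updates.

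First I would treat the representation step. With $\mathbf{\Lambda}^t$ and $P^t$ held fixed, Proposition~\ref{prop01} guarantees that the prototype update \eqref{barycenter} returns the exact minimizer of $\Delta(\cdot,\mathbf{\Lambda}^t,P^t)$ over the prototypes, so
\begin{equation}
\Delta(\mathbf{G}^{t+1},\mathbf{\Lambda}^t,P^t)\le \Delta(\mathbf{G}^t,\mathbf{\Lambda}^t,P^t).
\end{equation}
Next, holding $\mathbf{G}^{t+1}$ and $P^t$ fixed, the weighting step invokes Proposition~\ref{prop02}: the closed-form weights \eqref{eq:lambda2_1}--\eqref{eq:lambda2_2} (resp. \eqref{lambda4_1}--\eqref{lambda4_2}) are the exact minimizers of $\Delta(\mathbf{G}^{t+1},\cdot,P^t)$ subject to the product constraints, giving
\begin{equation}
\Delta(\mathbf{G}^{t+1},\mathbf{\Lambda}^{t+1},P^t)\le \Delta(\mathbf{G}^{t+1},\mathbf{\Lambda}^t,P^t).
\end{equation}
Finally, holding $\mathbf{G}^{t+1}$ and $\mathbf{\Lambda}^{t+1}$ fixed, the allocation rule of Proposition~\ref{prop03} assigns each object to its nearest prototype, which is exactly the partition minimizing $\Delta(\mathbf{G}^{t+1},\mathbf{\Lambda}^{t+1},\cdot)$, yielding
\begin{equation}
\Delta(\mathbf{G}^{t+1},\mathbf{\Lambda}^{t+1},P^{t+1})\le \Delta(\mathbf{G}^{t+1},\mathbf{\Lambda}^{t+1},P^t).
\end{equation}
Chaining these three inequalities gives $ss_{t+1}\le ss_t$, so the sequence is nonincreasing.

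To conclude convergence I would note that $\Delta$ is a sum of squared Wasserstein distances weighted by strictly positive coefficients, hence $ss_t\ge 0$ for all $t$; a nonincreasing sequence bounded below converges to a limit, which is the claimed stationary value. I expect the main subtlety to lie not in the monotonicity chain, which is routine once the three propositions are in hand, but in justifying that the allocation step is genuinely well defined so that the monotone sequence cannot cycle: because the number of distinct partitions of $E$ into $K$ labelled clusters is finite and the tie-breaking rule in Proposition~\ref{prop03} ($i\in C_k$ if $k<m$ when distances are equal) makes each allocation deterministic, the algorithm reaches a fixed partition in finitely many reassignments, at which point $\Delta$ stabilizes. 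This finiteness remark is what upgrades ``nonincreasing and bounded below'' to actual termination at a stationary value of $\Delta$.
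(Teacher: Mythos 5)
Your proof follows essentially the same route as the paper: the chain $\Delta(\mathbf{G}^t,\mathbf{\Lambda}^t,P^t)\ge\Delta(\mathbf{G}^{t+1},\mathbf{\Lambda}^t,P^t)\ge\Delta(\mathbf{G}^{t+1},\mathbf{\Lambda}^{t+1},P^t)\ge\Delta(\mathbf{G}^{t+1},\mathbf{\Lambda}^{t+1},P^{t+1})$ justified step by step via Propositions~\ref{prop01}, \ref{prop02} and \ref{prop03}, followed by nonnegativity of $\Delta$ to conclude convergence of the monotone sequence. Your closing remark on finiteness of partitions and deterministic tie-breaking is a sound bonus, though it belongs to the convergence of the series $z_t$ (the paper's Proposition~\ref{prop6}) rather than to this statement.
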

\begin{proof}First, we show that inequalities $(I)$, $(II)$ and $(III)$
\begin{equation}\label{pr1}
\underbrace {\Delta\left( {\mathbf{G}^t ,\mathbf{\Lambda} ^t ,P^t } \right)}_{ss_t }\overbrace {\: \ge \:}^{(I)}\Delta\left( {\mathbf{G}^{t + 1} ,\mathbf{\Lambda} ^t ,P^t } \right)\overbrace {\: \ge \:}^{(II)}\Delta\left( {\mathbf{G}^{t + 1} ,\mathbf{\Lambda}^{t + 1} ,P^t } \right)\overbrace {\: \ge \:}^{(III)}\underbrace {\Delta\left( {\mathbf{G}^{t + 1} ,\mathbf{\Lambda} ^{t + 1} ,P^{t + 1} } \right)}_{ss_{t + 1} }
\end{equation}
hold.\\
Inequality $(I)$ holds because
$$
\Delta\left( {\mathbf{G}^t ,\mathbf{\Lambda} ^t ,P^t } \right)=\sum\limits_{k=1}^K\sum\limits_{i\in C_k^t }d(\mathbf{y}_i,\mathbf{g}^t_k|\mathbf{\Lambda}^t)
$$
$$
\Delta\left( {\mathbf{G}^{t+1} ,\mathbf{\Lambda} ^t ,P^t } \right)=\sum\limits_{k=1}^K\sum\limits_{i\in C_k^t }d(\mathbf{y}_i,\mathbf{g}^{t+1}_k|\mathbf{\Lambda}^t)
$$
and according to Proposition \ref{prop01},
$$
\mathbf{g}_k^{(t + 1)}  = \underbrace {argmin}_{\mathbf{g_k} }\sum\limits_{j = 1}^p {\sum\limits_{i \in C_k^{(t)} } {d\left( {y_{ij} ,g_{kj}^{(t)} |\mathbf{\Lambda}^{(t)} } \right) } } \, (k=1, \dots, K).
$$
Inequality (II) also holds because
$$\Delta\left( {\mathbf{G}^{t+1} ,\mathbf{\Lambda} ^{t+1} ,P^t } \right)=\sum\limits_{k=1}^K\sum\limits_{i\in C_k^t }d(\mathbf{y}_i,\mathbf{g}^{t+1}_k|\mathbf{\Lambda}^{t+1})
$$
and according to Proposition \ref{prop02},
$$
    \lambda^{(t+1)}_k=\underbrace {argmin}_{\lambda_{k}}\sum\limits_{i\in C_k^{(t)}}{d\left( {y_{ij} ,g_{kj}^{(t+1)} |\lambda_k } \right)}.
$$
Finally, inequality $(III)$ holds because
$$\Delta\left( {\mathbf{G}^{t+1} ,\mathbf{\Lambda} ^{t+1} ,P^{t+1} } \right)=\sum\limits_{k=1}^K\sum\limits_{i\in C_k^t }d(\mathbf{y}_i,\mathbf{g}^{t+1}_k|\mathbf{\Lambda}^{t+1})
$$
and according to Proposition \ref{prop03}
\begin{equation}\label{C_ottimo}
    C^{(t+1)}_k=\underbrace {argmin}_{C\in\mathbb{P}(E)}\sum\limits_{i\in C}{d\left( {y_{ij} ,g_{kj}^{(t+1)} |\Lambda^{(t+1)} } \right)} \; (k=1,\ldots,K).
\end{equation}
Finally, because the series $ss_t$ decreases and it is bounded ($\Delta(z_t)\geq 0$), it converges.
\end{proof}
\begin{proposition}\label{prop6}
 The series $z_t=(G^t,\Lambda^t,P^t)$ converges.
\end{proposition}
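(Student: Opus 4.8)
The plan is to combine the monotone convergence of the criterion already established in Proposition \ref{prop5} with the finiteness of the space of partitions and the uniqueness of the representation and weighting steps, so that the configuration sequence is shown to be \emph{eventually constant}. First I would observe that the set $\mathbb{P}(E)$ of all partitions of the finite set $E$ into $K$ classes is finite. By Proposition \ref{prop01}, once a partition $P$ is fixed the optimal prototype vector $\mathbf{G}$ is uniquely determined, since each $g_{kj}$ is the histogram whose quantile function is the arithmetic mean of the quantile functions of the members of $C_k$. By Proposition \ref{prop02}, once $P$ and $\mathbf{G}$ are fixed the weight system $\mathbf{\Lambda}$ is uniquely determined by the closed-form expressions (\ref{eq:lambda2_1})--(\ref{eq:lambda2_2}) or (\ref{lambda4_1})--(\ref{lambda4_2}). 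Hence each $P$ determines a unique admissible configuration $(\mathbf{G}(P),\mathbf{\Lambda}(P),P)$, so the set of configurations the algorithm can visit is finite, and consequently the sequence $ss_t=\Delta(z_t)$ takes values in a finite set.

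Next I would invoke Proposition \ref{prop5}: the sequence $ss_t$ is non-increasing and bounded below, hence convergent. A non-increasing real sequence taking only finitely many values can strictly decrease only finitely often, so it is eventually constant; that is, there is an index $t_0$ with $ss_t=ss_{t_0}$ for every $t\ge t_0$.

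The decisive step is to pass from stationarity of the criterion to stationarity of the configuration. Fixing any $t\ge t_0$, so that $ss_{t+1}=ss_t$, the chain of inequalities $(I)$, $(II)$, $(III)$ of (\ref{pr1}) collapses into a chain of equalities, and reading these backwards while invoking the uniqueness of the minimizer at each step yields $z_{t+1}=z_t$. Precisely, equality in $(III)$ together with the deterministic tie-breaking rule of Proposition \ref{prop03} forces $P^{t+1}=P^t$; equality in $(II)$ together with the uniqueness of the weight formulas of Proposition \ref{prop02} forces $\mathbf{\Lambda}^{t+1}=\mathbf{\Lambda}^t$; and equality in $(I)$ together with the uniqueness of the barycentric prototype of Proposition \ref{prop01} forces $\mathbf{G}^{t+1}=\mathbf{G}^t$. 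Since each iteration is a deterministic map of the previous configuration, once two consecutive terms coincide the sequence is constant thereafter; hence $z_t$ is eventually constant and in particular converges.

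I expect the main obstacle to be this last step, and specifically the allocation stage: constancy of the criterion does not by itself prevent the partition from oscillating between two distinct clusterings sharing the same value of $\Delta$. This is exactly why the tie-breaking convention ``$i\in C_k$ if $k<m$'' of Proposition \ref{prop03} is essential, as it makes the allocation map single-valued so that an unchanged criterion value forces an identical partition. For the prototype and weight stages the required uniqueness is transparent, since both quantities are given by explicit closed-form expressions.
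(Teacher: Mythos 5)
Your proof is correct and follows the same skeleton as the paper's --- collapsing the chain of inequalities from Proposition \ref{prop5} into equalities and invoking uniqueness of each partial minimizer --- but it improves on the paper in one substantive respect. The paper's proof simply \emph{assumes} that the stationary value of $ss_t$ is attained at a finite iteration $T$; since a non-increasing convergent sequence need not attain its limit, that step is unjustified as written. Your preliminary observation --- that $E$ admits only finitely many partitions into $K$ clusters, that each partition determines $\mathbf{G}$ and $\mathbf{\Lambda}$ uniquely via Propositions \ref{prop01} and \ref{prop02}, hence $ss_t$ ranges over a finite set and a non-increasing sequence with finitely many values is eventually constant --- supplies exactly the missing lemma, and is the standard device for making k-means-type convergence arguments airtight. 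One small repair is needed in your final step, however: you read the equalities backwards and claim that equality in $(III)$ \emph{alone}, together with the tie-break, forces $P^{t+1}=P^t$. That is not quite right: $P^t$ was produced by the allocation map applied to the \emph{old} pair $(\mathbf{G}^t,\mathbf{\Lambda}^t)$, so under $(\mathbf{G}^{t+1},\mathbf{\Lambda}^{t+1})$ it may break a newly arisen tie in favor of a higher-index cluster and thus differ from $P^{t+1}$ while achieving the same criterion value --- equality of criterion values under ties does not pin down the partition. The forward order avoids this: equality $(I)$ plus uniqueness of the barycenter gives $\mathbf{G}^{t+1}=\mathbf{G}^t$, equality $(II)$ plus uniqueness of the closed-form weights gives $\mathbf{\Lambda}^{t+1}=\mathbf{\Lambda}^t$, and then, because $P^t$ is itself the single-valued, tie-broken allocation of $(\mathbf{G}^t,\mathbf{\Lambda}^t)$, the determinism of the allocation map --- the very mechanism you correctly identify in your closing paragraph --- yields $P^{t+1}=P^t$ without appealing to equality $(III)$ at all. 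With that reordering your argument is complete, and in fact slightly more rigorous than the paper's own proof, which asserts at step $(III)$ that ``$P$ is unique'' without addressing the possibility of ties.
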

\begin{proof}
Let us assume that the stationary point of the series $ss_t$ is achieved at the iteration $t=T$. Then, we have that $z_T=z_{T+1}$ and then $\Delta(z_T)=\Delta(z_{T+1})$ (i.e., $ss_T=ss_{T+1}$ ).

From  $\Delta(z_T)=\Delta(z_{T+1})$, we have $\Delta(\mathbf{G}^T,\mathbf{\Lambda}^T,P^T)=\Delta(\mathbf{G}^{T+1},\mathbf{\Lambda}^{T+1},P^{T+1})$ and this equality, according to Proposition \ref{prop5}, can be rewritten as equalities $(I)$, $(II)$ and $(III)$ as follows:
\begin{equation}\label{pr1}
\begin{array}{l}
\Delta\left( {\mathbf{G}^T ,\mathbf{\Lambda} ^T ,P^T } \right)\overbrace {=}^{(I)}\Delta\left( {\mathbf{G}^{T + 1} ,\mathbf{\Lambda} ^T ,P^T } \right)\overbrace {=}^{(II)}\\
\overbrace {=}^{(II)}\Delta\left( {\mathbf{G}^{T + 1} ,\mathbf{\Lambda} ^{T + 1} ,P^T } \right)\overbrace {=}^{(III)}\Delta\left( {\mathbf{G}^{T + 1} ,\mathbf{\Lambda} ^{T + 1} ,P^{T + 1} } \right).
\end{array}
\end{equation}
From the first equality $(I)$, we know that $\mathbf{G}^T=\mathbf{G}^{T+1}$ because $\mathbf{G}$ is unique, minimizing $\Delta$ when the partition $P^T$ and $\mathbf{\Lambda}^T$ are fixed. From the second equality $(II)$, we know that $\mathbf{\Lambda}^T=\mathbf{\Lambda}^{T+1}$ because $\mathbf{\Lambda}$ is unique, minimizing $\Delta$ when the partition $P^T$ and $\mathbf{G}^{T+1}$ are fixed. Then, from the third equality $(III)$, we know that $P^T=P^{T+1}$ because $P$ is unique, minimizing $\Delta$ when $\mathbf{\Lambda}^{T+1}$ and $\mathbf{G}^{T+1}$ are fixed.

Finally, we conclude that $z_T=z_{T+1}$. This conclusion holds for all $t\geq T$ and $z_t=z_T$, $\forall\;t\geq T$ and then $z_t$ converges.
\end{proof}
\subsubsection{Complexity of the algorithm}
Let us denote the number of operations required for the computation of the Wasserstein distance with $D$, the number of operations for computing the mean quantile function of a set of distributions with $Q$ and the number of iterations of the algorithm as $I$. The representation step requires approximatively $O(np\times Q)$ operations. The weighting step requires approximatively $O(np\times D)$ operations; indeed it is based on the computation of the inertia within the clusters. The allocation step requires approximatively $O(nKp\times D)$ operations. Then, the complete computational cost is of order $O\left(nKp\times D\times I\right)$.
\subsection{Algorithm schema}
The procedure steps for the Dynamic Clustering Algorithm with adaptive distances for histogram data are described in algorithm \ref{algo2}.
\begin{algorithm}[htbp]
\caption{Dynamic clustering with adaptive distances}
\label{algo2}
\begin{algorithmic}
\REQUIRE{K number of clusters, \\$E$ a set $n\geq K$ individuals described by $p>0$ histogram variables }
\STATE
\STATE \textbf{Initialization}
\STATE Set $t=0$
\STATE Randomly choose a partition of $E$ into $K$ clusters $P^t=(C^t_1,\ldots,C^t_k)$ of $E$
\STATE Set $\mathbf{\Lambda}^t=\mathbf{1}$.
\STATE
\REPEAT
\STATE \textbf{Step 1: Definition of the best prototypes (Representation step)}
\\
Given $P^t$ and $\mathbf{\Lambda}^t$, compute the $\mathbf{G}^{t+1}$ set of $K$ prototypes according to the criterion $\Delta(\mathbf{G}^{t+1},\mathbf{\Lambda}^t,P^t)$ in Eqn. (\ref{barycenter}).

\STATE
\STATE \textbf{Step 2: Definition of the best distances (Weighting step)}
\\Given $P^t$ and $\mathbf{G}^{t+1}$, compute the $\mathbf{\Lambda}^{t+1}$ matrix according to Eqs. (\ref{eq:lambda2_1}) and (\ref{eq:lambda2_2}) for \textbf{GC-AWD},  and Eqs. (\ref{lambda4_1}) and (\ref{lambda4_2}) for \textbf{CDC-AWD}.
\STATE
\STATE \textbf{Step 3: Definition of the best partition (Allocation step)}
\\Given $\mathbf{G}^{t+1}$ and $\mathbf{\Lambda}^{t+1}$ allocate each element of $E$ to the closest prototype according to Eqs. (\ref{ass1}) or (\ref{ass2})\\
$test\leftarrow 0$\\
$P^{t+1}\leftarrow P^{t}$\\
\FOR {$i=1$ to $n$}
\STATE find the cluster $C^{t+1}_m$ to which $i$ belongs
\STATE find the winning cluster $C^{t+1}_k$ such that
\STATE $k=\mathop{argmin}\limits_{1\leq h \leq K}d(\mathbf{y}_i,\mathbf{g}^{t+1}_h|\mathbf{\Lambda})$
\IF {$k\neq m$}
\STATE $test\leftarrow 1$
\STATE $C^{t+1}_k\leftarrow C^{t+1}_k\cup \{i\}$
\STATE $C^{t+1}_m\leftarrow C^{t+1}_m\backslash \{i\}$
\ENDIF
\ENDFOR
\STATE
\STATE set $t=t+1$
\UNTIL {$test\neq 0$}
\end{algorithmic}
\end{algorithm}

\section{Tools for the interpretation of the partition}\label{sec:tools}
After a clustering task, it is important to evaluate the results of the procedure, in order to have information about the intra-class and the inter-classes heterogeneity, the contribution of each variable to the final partition, etc. In the framework of the standard Dynamic Cluster Algorithm, some indices, based on the ratio between the intra-cluster and the total inertia, can be used as tools for interpreting the clustering results  \citet{CelAL89}.
Such indeces have been extended \citet{deca06b} to the case of dynamic clustering with adaptive and non-adaptive Euclidean distances for interval-valued data. Here, we use the same class of indices for the evaluation of the partition of histogram data.

In the following section, we define an extension of:
\begin{description}
\item[Total inertia]. It is the total sum of squared (TSS) distances of all the elements of the set $E$ from the global prototype ($g_E$). TSS is the adequacy criterion;
\item[Within inertia]. It is the within sum of squared (WSS) distances of the elements of a cluster from the respective barycenter, for all the clusters of the partition. It corresponds to the criterion $\Delta\left( {\mathbf{G},\mathbf{\Lambda},P}\right)$ with $P={C_1, \dots, C_k, \dots, C_K}$; $\mathbf{G}={g_1,\dots, g_k,\dots, g_K}$ and
\begin{equation}\label{LL3K}
 \mathbf{\Lambda}=\left[ {\begin{array}{*{20}c}
   \mathbf{\Lambda}_{{\bar y}}=(\lambda^{1}_{{\bar y}},\ldots,\lambda^{p}_{{\bar y}})  \\
   \mathbf{\Lambda}_{Disp}=(\lambda^{1}_{Disp},\ldots,\lambda^{p}_{Disp})
\end{array}} \right]
\end{equation}

according to the \textbf{GC-AWD};

as well as

\begin{equation}\label{LL4}
\mathbf{ \Lambda}=\left[ {\begin{array}{*{20}cc}
\mathbf{ \Lambda}_{1,{\bar y}} &\mathbf{\Lambda}_{1,Disp} \\
\ldots&\ldots\\
  \mathbf{ \Lambda}_{k,{\bar y}} &\mathbf{\Lambda}_{k,Disp} \\
  \ldots&\ldots\\
   \mathbf{ \Lambda}_{K,{\bar y}} &\mathbf{\Lambda}_{K,Disp}
   \end{array}}\right]
\end{equation}

according to the \textbf{CDC-AWD};

\item[Between inertia]. It is the sum of squared (BSS) distances between the prototypes of the several clusters and the general prototypes.
  Using the adaptive Wasserstein distance, and according to the Huygen's theorem of decomposition of inertia, we prove that the total sum of squares can be decomposed into two additive terms: the within sum of squares and the between sum of squares (TSS=WSS+BSS).
\end{description}

\subsection{Total sum of squares (TSS)}
Let $E$ be a set of $n$ real points, denoted as $x_i$, where each of them is weighted by a positive real number denoted as $\pi_i$ and a positive number $h$:
\begin{equation}\label{th_1}
    \sum\limits_{i=1}^n\sum\limits_{j>i} {\pi_i \pi_j (x_i-x_j)^2}=h\sum\limits_{i=1}^n {\pi_i (x_i-{\bar x})^2}
\end{equation}
where
\begin{equation}\label{MU}
{\bar x}=\frac{\sum\limits_{i=1}^n {\pi_i x_i}}{\sum\limits_{i=1}^n \pi_i}
\end{equation}
and
$h=\sum\limits_{j=1}^n\pi_j.
$
We define the total sum of squares as:
\begin{equation}
TSS=\sum\limits_{i=1}^n{\pi_i \left(x_i-{\bar x}\right)^2}.
\end{equation}
If we have $p>1$ descriptors, the total sum of squares is:
\begin{equation}\label{TSSR}
TSS=\sum\limits_{j=1}^p \sum\limits_{i=1}^n{\pi_{ij} \left(x_{ij}-{\bar x}_j\right)^2}.
\end{equation}
Without loss of generality, given a set $E$ of $n$ individuals described by $p$ histogram descriptors, in our two adapting schemes of Wasserstein distance, for a set of data clustered into $K$ groups, we have two kinds of $TSS$ and global prototypes that are consistent with Eqn. (\ref{barycenter}) and Eqn. (\ref{TSSR}).
\begin{description}
  \item[\textbf{STANDARD}] According to Eq. \ref{barycenter}, the general prototype $\mathbf{g}_{E}=(g_{E1},\ldots,g_{Ep})$ is a vector of histogram descriptions $g_{Ej}$ whose quantile function is:
  \begin{equation}\label{genprot_A1}
F^{-1}_{g_E}(t_j)=n^{-1}\sum\limits_{i=1}^n{F^{-1}_{i}(t_j)}\;\;\forall t_j\in[0,1]
\end{equation}

i.e., it is the histogram whose quantiles are the average of the quantiles of the elements of $E$ for the $j-th$ variable and the total sum of squares is
\begin{equation}\label{TSQ_A1}
    TSS=\sum\limits_{j=1}^p\sum\limits_{i\in E}d^2_{W}(y_{ij},g_{Ej}).
\end{equation}
  \item[\textbf{GC-AWD}] The general prototype is computed according to Eqs. \ref{barycenterC} and \ref{MU}.
      Thus, the general prototype $\mathbf{g}_{E}=(g_{E1},\ldots,g_{Ep})$ is a vector of histogram descriptions $g_{Ej}$ whose quantile function is:
      \begin{equation}\label{genprot_A1}
F^{-1}_{g_E}(t_j)=\underbrace{n^{-1}\sum\limits_{i=1}^n{\bar{y}_{ij}}}_{{\bar y}_{g_{Ej}}}+\underbrace{n^{-1}\sum\limits_{i=1}^n{F^{-1c}_{i}(t_j)}}_{F^{-1}_{g^c_E}(t_j)}\;\;\forall t_j\in[0,1]
\end{equation}

the total sum of squares is
\begin{equation}\label{TSQ_A2}
    TSS_{GC-AWD}=\sum\limits_{j=1}^p\sum\limits_{i\in E}\left[\lambda^j_{\bar y}({\bar y}_{ij}-{\bar y}_{g_{Ej}})^2 +\lambda^j_{Disp}d^2_{W}(y^c_{ij},g^c_{Ej})\right].
\end{equation}

\item[\textbf{CDC-AWD}] The general prototype $\mathbf{g}_{E}=(g_{E1},\ldots,g_{Ep})$ is a vector of histogram descriptions $g_{Ej}$ whose quantile function is:
    \begin{equation}\label{a4PRO}
        F^{-1}_E(t_j)=\bar{F}^{-1}_E(t_j)+{\bar y}_{g_{Ej}}
    \end{equation}
    where
  \begin{equation}\label{genprot_A4}
  \begin{array}{l}
      {\bar y}_{g_{Ej}}=\sum\limits_{k=1}^K\frac{\lambda^j_{k,{\bar y} }}{\sum\limits_{h=1}^K n_h\lambda^j_{h,{\bar y}}}\sum\limits_{i\in C_k}{\bar y}_{ij}\;\;\; and\\
    \bar{F}^{-1}_{g_E}(t_j)=\sum\limits_{k=1}^K\frac{\lambda^j_{k,Disp}}{\sum\limits_{h=1}^K n_h\lambda^j_{h,Disp}}\sum\limits_{i\in C_k}\bar{F}^{-1}_i(t_j)\;\;\forall t_j \in [0,1];
    \end{array}
\end{equation}
where $n_h$ is the number of elements belonging to the cluster $h$. In this case, the total sum of squares is
\begin{equation}\label{TSQ_A4}
    {TSS}_{CDC-AWD}=\sum\limits_{j=1}^p\sum\limits_{k=1}^K\sum\limits_{i\in C_k}\left[\lambda^j_{k,{\bar y}}\left({\bar y}_{ij}-{\bar y}_{g_{Ej}}\right)^2+ \lambda^j_{k,Disp}d^2_{W}(y^c_{ij},g^c_{Ej})\right];
\end{equation}
\end{description}

\subsection{Within sum of squares WSS}
The within sum of squares corresponds to the minimized criterion in the algorithm. Therefore, we recall the same results obtained in section \ref{sec:ADCA} for the proposed two adaptive distances:
  \begin{equation}\label{W}
    WSS=\Delta(\mathbf{G},\mathbf{\Lambda},P).
    \end{equation}

\subsection{Between sum of squares BSS}
For the proposed two adaptive distances, the between sum of squares are:
\begin{description}
 \item[\textbf{STANDARD}]
  \begin{equation}\label{BSQST}
    BSS=\sum\limits_{j=1}^p\sum\limits_{k=1}^K n_k d^2_{W}(g_{kj},g_{Ej});
 \end{equation}
  \item[\textbf{GC-AWD}]
  \begin{equation}\label{BSQ2}
    BSS_{GC-AWD}=\sum\limits_{j=1}^p\sum\limits_{k=1}^K n_k\left[\lambda^j_{\bar y}({\bar y}_{g_{kj}}-{\bar y}_{g_{Ej}})^2 +\lambda^j_{Disp}d^2_{W}(g^c_{kj},g^c_{Ej})\right];
 \end{equation}

  \item[\textbf{CDC-AWD}]
   \begin{equation}\label{BSQ4}
    BSS_{CDC-AWD}=\sum\limits_{j=1}^p\sum\limits_{k=1}^K n_k \left[\lambda^j_{k,{\bar y}}\left({\bar y}_{g_{kj}}-{\bar y}_{g_{Ej}}\right)^2+ \lambda^j_{k,Disp}d^2_{W}(g^c_{kj},g^c_{Ej})\right].
 \end{equation}
\end{description}
Based on the quantities $TSS$, $WSS$ and $BSS$, we can evaluate the clustering procedure as in \cite{deca06b}, where a quality of partition index is considered and is defined as:
\begin{equation}\label{PQI}
    QPI=1-\frac{WSS}{TSS}.
\end{equation}
Holding the  decomposition of inertia for the Wasserstein distance, the $QPI$ index can be written as follows:
\begin{equation}\label{QPI}
    QPI=\frac{BSS}{TSS}.
\end{equation}
Equation \ref{PQI} is equal to eq. \ref{QPI} if and only if the total inertia can be decomposed in two components: the inter-cluster inertia ($BSS$) and the intra-cluster inertia ($WSS$). We showed in Eq. \ref{INEDEC} that this is true for Wasserstein distance. Considering the additivity of $TSS$, $WSS$ and $BSS$, it is possible to detail the $QPI$ for each variable, for each cluster and for each component.

\section{Experimental results}\label{sec:ExperimentalResults}
Clustering (considered an unsupervised learning task) is an explorative method applied to a dataset, where, in general, no information about the class structure is available to the researcher. Agreeing with \citet{Meila05}, there are many competing criteria for comparing clusterings, with no clear best choice.
In an experimental evaluation, the quality of clustering algorithms is often based on their performance according to a specific quality index. Experiments use either a limited number of
real-world instances or synthetic data. While real-world data is crucial for testing the proposed algorithms,
until now there is a lack of public repositories furnishing data described by histograms. This is indeed true for histogram data; histograms are composed by synthesized raw data, while a large repository of standard data exists. Therefore, a test bed of synthetic, pre-classified data must be assembled by a generator.\\
In order to asses the quality of the proposed algorithms, we used synthetic data and histogram representations of real data. For the analysis of the synthetic data, we set up two Monte Carlo experiments that allowed the generation of two hundred datasets of histogram data of known cluster structure. We then evaluated the quality of each algorithm using an index for the evaluation of the agreement and one for the accuracy between the obtained partitions and the initial one. The real data analysis was performed using data from a set of meteorological stations in the People's Republic of China. In the following we present how the experiment have been set up and what indices have been used for assessing the performances of the algorithms.

\subsection{Synthetic dataset}
In this section, we will describe the performances of the two Dynamic Clustering methods based on adaptive squared Wasserstein distances when the data hold structures of (controlled) variability. In particular, we aim to show that GC-AWD has the best performance when the data present a different dispersion for each component of each histogram variable. Further, considering a different dispersion (for each component of the variables) for each cluster, we aim to show CDC-AWD gives better results when the data present a different dispersion for each component of the histogram variables for each cluster.\\
We cannot compare the use of the Wasserstein distance with other distances for distributions, because it is not assured the decomposition in terms of mean and of dispersion for the other distances presented in the literature (see \cite{VerIRP08did}).\\
To this end, we set up two Monte Carlo experiments. Each experiment consisted of 100 generations of 150 synthetic objects (3 clusters of 50 objects), described by two histogram variables. Each experiment was initialized by choosing, for each cluster and each variable, four parameters (mean, standard deviation, skewness and kurtosis), obtaining $3(clusters)\times 2(variables)$ baseline sets of four parameters. Starting from the baseline parameters, we assigned a standard deviation for each parameter and repeated the following steps 100 times:
      \begin{enumerate}
  \item for each cluster and each variable, we generated 50 sets of parameters, adding a random error (consistent with the standard deviation of the parameter) to each baseline set of four parameters on the basis of the object belonging to the class, obtaining $3(clusters)\times 2(variables)\times 50 (objects)$ sets of four parameters;
  \item  for each object and each variable, we generated 1,000 random numbers using a Pearson parametric distribution
      (for further details see \citet[Pg. 15, Eqn. 12.33]{John94}) based on the sets of parameters;
  \item for each object and each variable, we computed a histogram using the algorithm presented in \cite{IrpinoR07}, obtaining $3(clusters)\times 2(variables)\times 50 (objects)$ histograms;
      \item each clustering method (the standard Dynamic Clustering and the four adaptive distanced based ones) was randomly initialized 50 times and  we chose the best final clustering result (i.e., the one with lower criterion);
          \item for each best solution we computed the Corrected Rand Index \cite{HUBAR85} to indicate the quality of the result with respect to the initial labels of the data.
\end{enumerate}

To measure the quality of the results furnished by the dynamic clustering algorithm considering different adaptive distances,  an external validity index can be adopted. Indeed, in this case the data were labeled on the basis of the generator functions used to set up the experiments. In this paper, we use the Corrected Rand (CR) index, defined by \citet{HUBAR85} for comparing two partitions. $CR$ takes its values in $[-1,1]$ interval, where the value $1$ indicates perfect agreement between partitions; whereas values close to zero (or negatives) correspond to cluster agreement found by chance.
Further we use the accuracy index as the percentage of correct classified objects.

At the end of each experiment, we report the main statistics for the CR index and for the accuracy.

\paragraph{Experiment 1}
Considering the variability of a \emph{histogram} variable as a combination of the variability of the moments associated with each histogram description of each individual, in the first experiment we generated data in order to obtain two histogram variables that locally (for each cluster) present the same \emph{histogram} variability (i.e., each \emph{histogram} variable for each cluster has the same variability), while globally, the two histogram variables present different variability. In order to obtain the datasets, we fixed the baseline parameter space sampling from $3 (clusters)\;\times 2(variables)\; \times 4\; (parameters)$ Normal distributions, as described in Table \ref{tab:exp2}.\\

\begin{table}[htbp]
  \centering
  \caption{Baseline settings for Experiment 1: each couple are respectively the mean and standard deviation of the sampled Normal distributions used for extracting the parameters of a Pearson's family distribution that has been sampled for the extraction of values summarized by histogram data.}
    \resizebox{\textwidth}{!}{

    \begin {tabular}{cc}
    \begin{tabular}{rcccc}

          & \multicolumn{ 4}{c}{Variable 1: space of parameters}       \\
          & Mean  & St.dev & Skewness & Kurtosis \\
          \hline
    Cluster 1 & (-4.8, 6) & (12, 1.2) & (-0.05, 0.1) & ( 3.10, 0.1) \\
    Cluster 2 & (-4.8, 6) & ( 9, 1.2) & ( 0.00, 0.1) & ( 3.00, 0.1) \\
    Cluster 3 & (10.0, 6) & ( 6, 1.2) & ( 0.10, 0.1) & ( 2.95, 0.1) \\
    \hline
    \end{tabular}&
    \begin{tabular}{cccc}
          & \multicolumn{ 3}{c}{Variable 2: space of parameters}       \\
           Mean  & St.dev & Skewness & Kurtosis \\
    \hline
     ( 17, 12) & (6.0, 0.6) & ( 0.1, 0.1) & (2.95, 0.1) \\
     (-17, 12) & (4.6, 0.6) & ( 0.0, 0.1) & (3.00, 0.1) \\
     (  0, 12) & (3.3, 0.6) & (-0.1, 0.1) & (3.10, 0.1) \\
    \hline
    \end{tabular}
    \end{tabular}
}
  \label{tab:exp2}
\end{table}
In order to have a visualization of the space of the parameters, Fig.\ref{exp2} shows the ellipses for each cluster and for each parameter.\\
\begin{figure}[htbp]
\centering
  \includegraphics[width=0.5\textwidth]{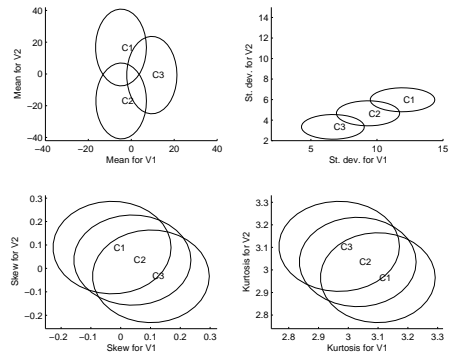}\\
  \caption{Experiment 1: 95\% ellipses for the bivariate distributions of the parameters of the distributions defined in Table \ref{tab:exp2}.}\label{exp2}
\end{figure}
After the generation of 100 datasets according the the baseline parameters, we computed the CR index as well as the accuracy for each algorithm. The means and the standard deviations of the agreement indices are summarized in Table \ref{tab:cr2}.
\begin{table}[htbp]
  \centering
  \caption{Mean of the best 100 CR indices  and accuracies for Experiment 1}
    \begin{tabular}{lccc}

          & STANDARD  & GC-AWD & CDC-AWD \\
          \hline
    Mean Best CR (std)& 0.4979 (0.0097) &  \textbf{0.6596 (0.0072)}  & 0.6292 (0.0094)\\
Mean Best Accuracy (std) &0.7867 (0.0410)&\textbf{0.8667 (0.0340)}&\textbf{0.8667(0.0340)}\\
    \hline
    \end{tabular}
  \label{tab:cr2}
\end{table}
The results of Experiment 1 show that the Dynamic Clustering Algorithms based on adaptive distances outperformed the classic Dynamic Clustering. In this case, it can be seen that GC-AWD and CDC-AWD had similar performances, but the algorithm  GC-AWD allowed the best results in terms of the mean of CR index .

\paragraph{Experiment 2}
The second experiment was based on the generation of histogram data in order to obtain two histogram variables where, for each \emph{histogram} variable and for each cluster there is different variability, while globally, the two variables presented a similar variability. In order to obtain the 100 datasets, we fixed the baseline parameter space sampling from $3 (clusters)\;\times 2(variables)\; \times 4\; (parameters)$ Normal distributions, as described in Table \ref{tab:exp4}.\\
\begin{table}[htbp]
  \centering
  \caption{Baseline settings for Experiment 2:  each couple are respectively the mean and standard deviation of the sampled Normal distributions used for extracting the parameters of a Pearson's family distribution that has been sampled for the extraction of values summarized by histogram data.}
     \resizebox{\textwidth}{!}{

    \begin {tabular}{cc}
    \begin{tabular}{rcccc}

          & \multicolumn{ 4}{c}{Variable 1: space of parameters}       \\
          & Mean  & St.dev & Skewness & Kurtosis \\
          \hline
    Cluster 1 & ( 0.0, 0.8) & (3.6, 0.3) & (-0.04, 0.01) & (2.90, 0.03) \\
    Cluster 2 & (-0.5, 1.6) & (2.7, 0.2) & ( 0.03, 0.01) & (3.05, 0.03) \\
    Cluster 3 & ( 2.8, 2.4) & (1.8, 0.1) & ( 0.10, 0.01) & (3.20, 0.03) \\
    \hline
    \end{tabular}&
    \begin{tabular}{cccc}
          & \multicolumn{ 3}{c}{Variable 2: space of parameters}       \\
           Mean  & St.dev & Skewness & Kurtosis \\
    \hline
    ( 0.0, 2.3) & (4.1, 0.1) & ( 0.10, 0.01) & (3.20, 0.03) \\
    (-3.0, 1.6) & (3.4, 0.2) & ( 0.03, 0.01) & (3.05, 0.03) \\
    ( 1.1, 0.8) & (2.8, 0.3) & (-0.03, 0.01) & (2.90, 0.03) \\
    \hline
    \end{tabular}
    \end{tabular}
}
  \label{tab:exp4}
\end{table}
In order to have a visualization of the space of the parameters, Fig.\ref{exp4} shows the ellipses for each cluster and for each parameter.\\
\begin{figure}[htbp]
  \centering
  \includegraphics[width=0.5\textwidth]{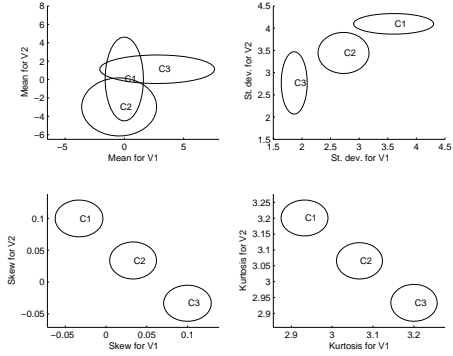}\\
  \caption{Experiment 4: 95\% ellipses for the bivariate distributions of the parameters of the distributions defined in Table \ref{tab:exp4}.}\label{exp4}
\end{figure}
After the generation of 100 datasets according to the baseline parameters, we computed the CR index as well as the accuracy for each algorithm. The means and the standard deviations of the agreement indices are summarized in Table \ref{tab:cr4}.

\begin{table}[htbp]
  \centering
  \caption{Mean of the best 100 CR indices  and accuracies for Experiment 2}
    \begin{tabular}{lccc}
          & STANDARD  & GC-AWD & CDC-AWD \\
          \hline
    Mean Best CR (std)& 0.4255 (0.0064) & 0.4248 (0.0063)& \textbf{0.5376 (0.0095)}\\
Mean Best Accuracy (std) &0.6867 (0.0464)&0.6867(0.0464)&\textbf{0.7133 (0.0452)}\\
    \hline
    \end{tabular}
  \label{tab:cr4}
\end{table}
The second experiment shows that the Dynamic Clustering Algorithms based on adaptive distances based on the schemas of CDC-AWD outperformed the classic Dynamic Clustering. It can be seen that CDC-AWD allowed the best results in terms of the mean CR index  and accuracy, while GC-AWD  did not outperform the STANDARD algorithm.\\
We have empirical evidence that each of the two schemas of adaptive distances allows better clustering results, when considering the two structures of dispersion listed at the beginning of the section.

\subsection{A real dataset: Climatic data from China}
In this section, we use dynamic clustering based on adaptive distances on a dataset where the descriptors are the distributions of the mean monthly temperature, the pressure, the relative humidity, the wind speed and the total monthly precipitations recorded in 60 meteorological stations in the People's Republic of China\footnote{Dataset URL: \texttt{http://dss.ucar.edu/datasets/ds578.5/}}, recorded from 1840 to 1988. For the purposes of this paper, we have considered the distributions of the variables for January (the coldest month) and July (the hottest month), so our initial data is a $60 \times 10$ matrix, where the generic $(i,j)$ cell contains the histogram of the values for the $j^{th}$ variable of the $i^{th}$ meteorological station, defined by means of the algorithm proposed by \citet{IrpinoR07}. Table \ref{tab:basestat} describes the variables and the main statistics related to the global barycenter for each variable (${\bar y}$ and $s$ are, respectively, the mean and the standard deviation of the histogram barycenter, while $TSS$ is computed according the formulation of the Total Sum of Squares proposed by \citet{VeIR08} as a measure of variability of the histogram variable).
\\
\begin{table}[htbp]
  \centering
  \begin{tabular}{rlrrr}
    \hline
    $\sharp$& Variable & ${\bar y}_{j}$ & $s_{j}$ & $TSS_j$\\
    \hline
$Y_1$&Mean Relative Humidity (percent) Jan& 67.9 & 7.0& 127.9 \\
$Y_2$&Mean Relative Humidity (percent) July& 73.9& 4.5&114.2  \\
$Y_3$&Mean Station Pressure (mb) Jan& 968.3& 3.6& 5864.7\\
$Y_4$&Mean Station Pressure (mb) July& 951.1& 3.0&5084.4\\
$Y_5$&Mean Temperature (Cel.$\times10$) Jan& -12.1& 17.3& 114.8\\
$Y_6$&Mean Temperature (Cel.$\times10$) July& 252.3& 10.5& 113.5 \\
$Y_7$&Mean Wind Speed (m/s) Jan& 2.3& 0.6& 1.1\\
$Y_8$&Mean Wind Speed (m/s) July& 2.3& 0.5& 0.6\\
$Y_9$&Total Precipitation (mm) Jan& 18.2& 14.3& 519.6\\
$Y_{10}$&Total Precipitation (mm) July& 144.6& 80.8& 499.9\\
    \hline
  \end{tabular}
  \caption{Basic statistics of the histogram variables: ${\bar y}_j$ and $s_j$ are the mean and the standard deviation of the barycenter histogram for  the $j^{th}$ variable, while $TSS_j$ is the Total Sum of Squares for the $j^{th}$ variable.}\label{tab:basestat}
\end{table}
Table \ref{tab:data3} shows the main characteristics of a subset of the observed stations. The mean, standard deviation, the skewness and the kurtosis are reported for each histogram description. The last two measures correspond to the third and fourth standardized moments. Table \ref{tab:data3} shows also the spatial coordinates of the stations (longitude, latitude and elevation in meters). This information is not relevant to the analysis except for the interpretation of the obtained cluster, indeed the spatial coordinates do not play an active role in the analysis.
\begin{table}
  \centering
  \caption{Dataset from 60 climatic stations in China: main characteristics for each station.}
    \resizebox{\textwidth}{!}{
     \begin{tabular}{l}
     \begin{tabular}{|rl|rrr|rrrr|rrrr|rrrr|}
     \hline
          &       &       &       &       & \multicolumn{ 4}{|c|}{$Y_1$}    & \multicolumn{ 4}{|c|}{$Y_2$}    & \multicolumn{ 4}{|c|}{$Y_3$}    \\
    ID    & St.Name & {\bf Long.} & {\bf Lat.} & {\bf Elev.}  & Mean  & St.dev. & Skew. & Kurt. & Mean  & St.dev. & Skew. & Kurt. & Mean  & St.dev. & Skew. & Kurt. \\
    \hline
    1     & Hailaer & 119.75 & 49.22 & 612.80 & 781.77 & 59.34 & 1.26  & 5.78  & 708.21 & 54.58 & -0.88 & 3.28  & 9,477.82 & 25.02 & -0.72 & 3.77 \\
    2     & NenJiang & 125.23 & 49.17 & 242.20 & 740.16 & 48.88 & 0.31  & 3.24  & 779.10 & 38.95 & -0.32 & 1.89  & 9,920.16 & 28.91 & -0.13 & 2.42 \\
    3     & BoKeTu & 121.92 & 48.77 & 739.40 & 690.43 & 46.44 & 1.22  & 5.68  & 782.71 & 42.46 & -0.51 & 2.32  & 9,318.44 & 39.43 & -0.04 & 2.13 \\
    4     & QiQiHaEr & 123.92 & 47.38 & 145.90 & 688.03 & 67.16 & 0.11  & 3.08  & 727.01 & 57.91 & -0.53 & 3.16  & 10,051.41 & 26.14 & -0.10 & 2.33 \\
    $\cdots$&$\cdots$&$\cdots$&$\cdots$&$\cdots$&$\cdots$&$\cdots$&$\cdots$&$\cdots$&$\cdots$&$\cdots$&$\cdots$& $\cdots$&$\cdots$&$\cdots$&$\cdots$&$\cdots$ \\
    59    & ZhanJiang & 110.40 & 21.22 & 25.30 & 783.32 & 64.65 & -0.98 & 3.59  & 808.81 & 20.74 & -0.06 & 2.90  & 10,166.05 & 20.78 & -0.34 & 2.95 \\
    60    & HaiKou & 110.35 & 20.03 & 14.10 & 851.33 & 41.81 & -0.62 & 3.23  & 819.64 & 28.38 & -0.48 & 3.97  & 10,175.49 & 26.03 & 0.66  & 4.59 \\
    \hline
    \end{tabular}
    \\
    \\
  \begin{tabular}{|rl|rrrr|rrrr|rrrr|}
  \hline
          &       & \multicolumn{ 4}{|c|}{$Y_4$}    & \multicolumn{ 4}{|c|}{$Y_5$}    & \multicolumn{ 4}{|c|}{$Y_6$}    \\
    ID    & STNAME & Mean  & St.dev. & Skew. & Kurt. & Mean  & St.dev. & Skew. & Kurt. & Mean  & St.dev. & Skew. & Kurt. \\
    \hline
    1     & Hailaer & 9,339.85 & 16.68 & -0.39 & 3.67  & -275.31 & 32.40 & -0.86 & 3.72  & 200.71 & 13.60 & 0.96  & 4.47 \\
    2     & NenJiang & 9,756.11 & 18.44 & 0.50  & 2.66  & -254.96 & 26.44 & -0.11 & 2.42  & 206.27 & 9.76  & 0.44  & 2.70 \\
    3     & BoKeTu & 9,224.33 & 29.67 & 0.71  & 2.56  & -216.13 & 26.50 & -0.69 & 3.77  & 179.46 & 10.87 & 0.66  & 4.04 \\
    4     & QiQiHaEr & 9,861.43 & 15.68 & 0.08  & 1.95  & -197.70 & 23.36 & -0.49 & 2.64  & 227.18 & 10.68 & 0.06  & 2.69 \\
    $\cdots$&$\cdots$&$\cdots$&$\cdots$&$\cdots$&$\cdots$&$\cdots$&$\cdots$&$\cdots$&$\cdots$&$\cdots$&$\cdots$& $\cdots$&$\cdots$\\
    58    & NanNing & 9,946.66 & 19.69 & -1.41 & 4.35  & 129.52 & 18.88 & -0.69 & 3.18  & 283.90 & 5.96  & -0.68 & 4.29 \\
    59    & ZhanJiang & 10,012.91 & 13.96 & 0.00  & 3.34  & 157.52 & 16.68 & -0.45 & 2.66  & 288.28 & 5.42  & 0.12  & 2.48 \\
    60    & HaiKou & 10,028.93 & 18.09 & 0.44  & 3.63  & 174.09 & 17.78 & 0.02  & 2.55  & 284.05 & 5.49  & -0.27 & 3.11 \\
    \hline
    \end{tabular}
    \\
    \\
    \begin{tabular}{|rl|rrrr|rrrr|rrrr|rrrr|}
    \hline
          &       & \multicolumn{ 4}{|c|}{$Y_7$}    & \multicolumn{ 4}{|c|}{$Y_8$}    & \multicolumn{ 4}{|c|}{$Y_9$}    & \multicolumn{ 4}{|c|}{$Y_{10}$} \\
    ID    & STNAME & Mean  & St.dev. & Skew. & Kurt. & Mean  & St.dev. & Skew. & Kurt. & Mean  & St.dev. & Skew. & Kurt. & Mean  & St.dev. & Skew. & Kurt. \\
    \hline
    1     & Hailaer & 18.80 & 7.60  & 0.77  & 3.47  & 27.84 & 6.51  & 1.25  & 6.66  & 34.79 & 24.92 & 1.27  & 4.55  & 886.70 & 424.09 & 0.43  & 3.63 \\
    2     & NenJiang & 15.99 & 6.78  & 1.25  & 3.67  & 26.97 & 7.37  & 0.64  & 2.72  & 31.67 & 22.05 & 1.19  & 4.34  & 1,397.45 & 683.74 & 0.73  & 4.42 \\
    3     & BoKeTu & 33.97 & 7.44  & 0.01  & 3.14  & 20.44 & 4.32  & 0.46  & 2.94  & 25.41 & 23.90 & 1.06  & 3.40  & 1,362.48 & 605.98 & 0.86  & 3.32 \\
    4     & QiQiHaEr & 28.32 & 6.89  & 0.36  & 2.52  & 29.71 & 5.09  & 0.03  & 2.64  & 17.49 & 17.74 & 1.17  & 4.52  & 1,321.97 & 753.20 & 0.77  & 3.42 \\
    $\cdots$&$\cdots$&$\cdots$&$\cdots$&$\cdots$&$\cdots$&$\cdots$&$\cdots$&$\cdots$&$\cdots$&$\cdots$&$\cdots$& $\cdots$&$\cdots$&$\cdots$&$\cdots$&$\cdots$&$\cdots$\\
    58    & NanNing & 16.83 & 4.33  & -0.04 & 2.57  & 20.37 & 4.04  & -0.30 & 2.30  & 317.73 & 293.68 & 1.59  & 6.26  & 2,003.83 & 872.76 & 0.36  & 2.77 \\
    59    & ZhanJiang & 31.01 & 9.34  & 1.27  & 5.62  & 28.86 & 5.67  & 0.23  & 2.70  & 221.59 & 248.06 & 1.49  & 4.87  & 2,080.67 & 1,146.92 & 0.56  & 3.08 \\
    60    & HaiKou & 32.65 & 8.63  & 0.62  & 2.53  & 25.84 & 6.62  & -0.15 & 2.14  & 212.79 & 185.36 & 0.77  & 2.45  & 1,899.15 & 1,029.41 & 1.46  & 6.26 \\
    \hline
    \end{tabular}
    \end{tabular}
    }
  \label{tab:data3}
\end{table}
In this case, we use clustering in an explorative fashion. For the sake of brevity, we show the results of only one method.
The choices for the method and number of clusters were made according to the maximum value observed for the \citet{Cali75} index (CH). The CH index is a validity index that is generally used for the determination of the number of clusters, and can be viewed as a Pseudo-F index. If $K$ is the number of clusters of a partition of a set of $n$ individuals, $WSS(K)$ the Within Sum of Squares and $BSS(K)$ the Between Sum of Squares, the $CH$ index is computed as follows:
\begin{equation}\label{CHindex}
    CH(K)=\frac{BSS(K)/(K-1)}{WSS(K)/(n-K)}.
\end{equation}
In this case, we executed 100 initializations for each clustering algorithm and a number of clusters of the partition going from $2$ to $10$. Among the three algorithms, the method that allowed a good behavior for the $CH$ index was CDC-AWD. Indeed, as it is shown in Fig. \ref{fig:CHindex}, it reaches a maximum when we choose a partition in $K=8$ clusters, while the other methods do not present an absolute maximum suggesting a scarce cluster structure of the data. For $K=8$, CDC-AWD registered an high value for the $QPI$ index ($0.928$), while the dynamic clustering based on non-adaptive distances (STANDARD) had a $QPI$ equal to $0.873$, and  GC-AWD  had $QPI$'s respectively equal to  $0.825$.
\begin{figure}[htbp]
  \centerline{
    \includegraphics[width=.5\textwidth]{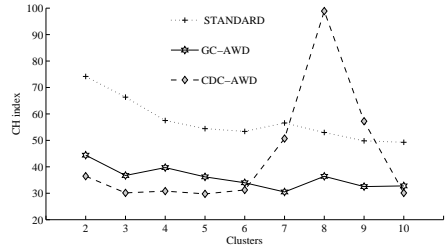}}
  \caption{CH indices for the three algorithms and for $K=2,\ldots,10$.}
  \label{fig:CHindex}
\end{figure}
Table \ref{tab:China_Lambda} shows the final set of $\lambda$ weights associated with the components (the mean and the dispersion)of the distance in each cluster according to the CDC-AWD algorithm. The $\lambda$'s can be considered as normalization factors for the component of the variables; indeed, we can see a high value for the weights associated with the variables $Y_7$ and $Y_8$ (the Wind Speed in January and in July) that have, in general, a lower $TSS$. Because we used the CDC-AWD algorithm, the weights take into account the within cluster variability of the components of the variables according to each cluster, so, in this case, we must read the weights cluster by cluster. Indeed, the constraint of product equal to 1 related to the weights computed according to Eqs. \ref{lambda4_1} and \ref{lambda4_2} are referred to each cluster and cannot generally be compared across the different clusters.
\begin{table}[htbp]
  \centering
  \caption{Weights generated by CDC-AWD on China data, $K=8$. $n_k$'s are the cardinality of the obtained clusters.}
 \resizebox{\textwidth}{!} {
  \begin{tabular}{rrrrrrrrrrrr}
          &       & \multicolumn{ 2}{c}{$Y_1$} & \multicolumn{ 2}{c}{$Y_2$} & \multicolumn{ 2}{c}{$Y_3$} & \multicolumn{ 2}{c}{$Y_4$} & \multicolumn{ 2}{c}{$Y_5$} \\
          & $n_k$     & ${\bar y}$ & $Disp$ & ${\bar y}$ & $Disp$ & ${\bar y}$ & $Disp$ & ${\bar y}$ & $Disp$ & ${\bar y}$ & $Disp$ \\
\hline
    Cluster 1 & 10    & 0.866 & 0.558 & 1.230 & 1.177 & 0.080 & 0.465 & 0.067 & 0.582 & 0.904 & 14.628 \\
    Cluster 2 & 8     & 1.251 & 0.234 & 1.649 & 0.325 & 0.028 & 1.636 & 0.035 & 1.684 & 0.311 & 1.696 \\
    Cluster 3 & 2     & 0.057 & 0.781 & 0.050 & 0.114 & 0.593 & 0.710 & 528.519 & 0.476 & 0.091 & 10.424 \\
    Cluster 4 & 11    & 0.329 & 0.214 & 0.480 & 0.222 & 0.794 & 1.784 & 1.037 & 1.394 & 1.115 & 6.680 \\
    Cluster 5 & 3     & 0.121 & 0.183 & 0.095 & 1.253 & 0.189 & 0.038 & 0.169 & 0.048 & 2.149 & 7.179 \\
    Cluster 6 & 6     & 0.199 & 1.292 & 0.654 & 1.337 & 0.009 & 0.006 & 0.010 & 0.005 & 1.489 & 25.620 \\
    Cluster 7 & 8     & 1.561 & 0.161 & 0.226 & 0.397 & 0.014 & 1.868 & 0.017 & 0.191 & 1.236 & 5.317 \\
    Cluster 8 & 12    & 0.691 & 0.811 & 1.346 & 1.101 & 0.548 & 0.514 & 1.661 & 0.467 & 0.280 & 21.020 \\
\hline
\hline
          &       & \multicolumn{ 2}{c}{$Y_6$} & \multicolumn{ 2}{c}{$Y_7$} & \multicolumn{ 2}{c}{$Y_8$} & \multicolumn{ 2}{c}{$Y_9$} & \multicolumn{ 2}{c}{$Y_{10}$} \\
          & $n_k$     & ${\bar y}$ & $Disp$ & ${\bar y}$ & $Disp$ & ${\bar y}$ & $Disp$ & ${\bar y}$ & $Disp$ & ${\bar y}$ & $Disp$ \\
\hline
    Cluster 1 & 10    & 139.669 & 22.629 & 38.096 & 25.118 & 77.594 & 55.284 & 0.090 & 0.010 & 0.005 & 0.001 \\
    Cluster 2 & 8     & 2.929 & 11.920 & 24.585 & 12.705 & 66.030 & 15.128 & 14.789 & 0.420 & 0.023 & 0.003 \\
    Cluster 3 & 2     & 119.150 & 7.413 & 18.507 & 219.007 & 22.134 & 34.659 & 2.106 & 0.234 & 0.000 & 0.000 \\
    Cluster 4 & 11    & 6.248 & 11.517 & 10.351 & 29.491 & 16.268 & 53.147 & 0.423 & 0.049 & 0.016 & 0.001 \\
    Cluster 5 & 3     & 2.572 & 17.131 & 96.803 & 103.463 & 105.639 & 26.841 & 0.261 & 0.095 & 0.185 & 0.073 \\
    Cluster 6 & 6     & 6.759 & 126.531 & 343.066 & 93.389 & 632.098 & 205.222 & 1.147 & 0.135 & 0.032 & 0.002 \\
    Cluster 7 & 8     & 18.122 & 11.068 & 73.259 & 28.923 & 112.579 & 36.336 & 6.912 & 0.316 & 0.009 & 0.002 \\
    Cluster 8 & 12    & 24.848 & 12.611 & 21.428 & 23.214 & 24.027 & 16.495 & 0.038 & 0.020 & 0.009 & 0.002 \\
\hline
    \end{tabular}
    }
  \label{tab:China_Lambda}
\end{table}
 Figure \ref{fig:ch_clu} is the map of the 60 stations. Each station is marked with a symbol that represents the cluster to which it belongs. Note that the clusters represent geographical areas that are consistent with the climate zones of China.\\
\begin{figure}
  \includegraphics[width=0.5\textwidth]{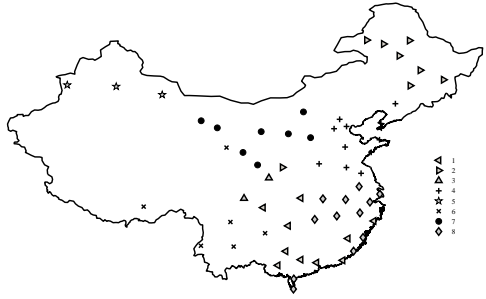}\\
  \caption{Map of the 60 stations clustered into $K=8$ classes using CDC-AWD. Each station is marked with a symbol that represents the cluster to which it belongs. }\label{fig:ch_clu}
\end{figure}
In order to comment on the clustering results and considering the additive properties of the $WSS$, the $BSS$ , and (as consequence) $TSS$  with respect to the components of the variables, the variables and the clusters, we present different versions of the $QPI$ in Table \ref{tab:QPIs}, as proposed by \citet{deca06b}:
\begin{itemize}
  \item The $QPI$'s denoted by $\frac{BSS_{{\bar y}_kj}}{TSS_{{\bar y}_kj}}$ and  $\frac{BSS_{Disp_kj}}{TSS_{Disp_kj}}$ for the components and by $\frac{BSS_{kj}}{TSS_{kj}}$ for the variables in each cluster, allow us to measure the homogeneity of a cluster with respect to the components and the variables: the closer they are to 1, the more likely the cluster is to contain similar objects.
  \item The $QPI$'s denoted by $\frac{BSS_{{\bar y}_k}}{TSS_{{\bar y}_k}}$ and  $\frac{BSS_{Disp_k}}{TSS_{Disp_k}}$ for the components and by $\frac{BSS_{k}}{TSS_{k}}$ for all the variables in each cluster, allow the measurement of the homogeneity of a cluster with respect to all the components and all the variables: the closer they are to 1, the more likely the cluster is to contain similar objects for all the components or all the variables.
  \item The $QPI$'s related to each component $\left(\frac{BSS_{{\bar y}_j}}{TSS_{{\bar y}_j}}\;\mathrm{and}\; \frac{BSS_{Disp_j}}{TSS_{Disp_j}}\right)$ and each variable $\left(\frac{BSS_j}{TSS_j}\right)$ for all the clusters,  allow us to understand the contribution of each component or of each variable to the cluster separation (The closer the QPI is to 1, the more the clusters are separated).
  \item The global $QPI$'s related to the components $\left(\frac{BSS_{{\bar y}}}{TSS_{{\bar y}}}\;\mathrm{and}\; \frac{BSS_{Disp}}{TSS_{Disp}}\right)$ and the $QPI$ related to all the variables $\left(\frac{B}{T}\right)$ allow us to evaluate the global quality of the clustering results for each component and each variable.
\end{itemize}
In general, the CDC-AWD algorithm reaches a $QPI=\frac{BSS}{TSS}=0.928$ for a number of clusters equal to $K=8$. Specifically, the mean component of the variables allows higher homogeneity within clusters $\left(\frac{BSS_{\bar y}}{TSS_{\bar y}}=0.932\right)$, while the dispersion component has a medium effect $\left(\frac{BSS_{Disp}}{TSS_{Disp}}=0.553\right)$.
Considering the second group of $QPI$ indices from the results presented in Table \ref{tab:QPIs}, cluster 3 contains objects that are very similar. For dispersion structure,  cluster 2 is slightly more globally heterogenous. Considering the effect of each component in clustering the current dataset,  the two variables that allow better homogeneity in the cluster are $Y_3$ and $Y_4$ (Mean Station Pressure in January and in July), while the worst results are observed for $Y_7$ (Mean Wind Speed in January) and $Y_2$ (Mean Relative Humidity in July).\\
\begin{table}[htbp]
  \centering
  \caption{$QPI$s resulting from CDC-AWD, which has been used for partitioning China's stations into $K=8$ clusters. The $QPI$s are detailed for each component, for each variable and for each cluster.}
  \resizebox{0.9\textwidth}{!}{
     \begin{tabular}{|lrrrrrrrrrr|r|}
    \multicolumn{ 12}{c}{$QPI_{\bar y}$}                                                                 \\
    \hline
    $\frac{BSS_{{\bar y}_kj}}{TSS_{{\bar y}_kj}}$ & $Y_1$ & $Y_2$ & $Y_3$ & $Y_4$ & $Y_5$ & $Y_6$ & $Y_7$ & $Y_8$ & $Y_9$ & $Y_{10}$ & $\frac{BSS_{{\bar y}_k}}{TSS_{{\bar y}_k}}$   \\
   \hline
    Cluster 1 & 0.860 & 0.548 & 0.072 & 0.933 & 0.929 & 0.920 & 0.092 & 0.132 & 0.908 & 0.800 & {\bf 0.857}   \\
    Cluster 2 & 0.469 & 0.347 & 0.604 & 0.365 & 0.832 & 0.820 & 0.435 & 0.720 & 0.016 & 0.596 & {\bf 0.641}   \\
    Cluster 3 & 0.960 & 0.838 & 1.000 & 1.000 & 0.924 & 0.999 & 0.986 & 0.978 & 0.986 & 0.854 & {\bf 0.999}   \\
    Cluster 4 & 0.564 & 0.075 & 0.837 & 0.995 & 0.129 & 0.486 & 0.606 & 0.524 & 0.407 & 0.899 & {\bf 0.957}   \\
    Cluster 5 & 0.232 & 0.921 & 0.992 & 0.926 & 0.962 & 0.688 & 0.433 & 0.881 & 0.316 & 0.993 & {\bf 0.969}   \\
    Cluster 6 & 0.304 & 0.060 & 0.868 & 0.781 & 0.430 & 0.860 & 0.040 & 0.359 & 0.174 & 0.750 & {\bf 0.681}   \\
    Cluster 7 & 0.952 & 0.844 & 0.956 & 0.902 & 0.827 & 0.973 & 0.543 & 0.137 & 0.770 & 0.423 & {\bf 0.911}   \\
    Cluster 8 & 0.881 & 0.435 & 0.825 & 0.998 & 0.763 & 0.840 & 0.773 & 0.730 & 0.889 & 0.792 & {\bf 0.985}   \\
    \hline
    $\frac{BSS_{{\bar y}_j}}{TSS_{{\bar y}_j}}$ & {\bf 0.781} & {\bf 0.525} & {\bf 0.923} & {\bf 0.990} & {\bf 0.790} & {\bf 0.897} & {\bf 0.465} & {\bf 0.564} & {\bf 0.694} & {\bf 0.879} & {\bf 0.932} \\
    & & & & & & & & & & & $\frac{BSS_{{\bar y}}}{TSS_{{\bar y}}}$ \\
 \hline

    \multicolumn{ 12}{c}{$QPI_{Disp}$}                                                              \\
    \hline
    $\frac{BSS_{Disp_kj}}{TSS_{Disp_kj}}$ & $Y_1$ & $Y_2$ & $Y_3$ & $Y_4$ & $Y_5$ & $Y_6$ & $Y_7$ & $Y_8$ & $Y_9$ &  $Y_{10}$ & $\frac{BSS_{{\bar y}_k}}{TSS_{Disp_k}}$   \\
    \hline
    Cluster 1 & 0.123 & 0.204 & 0.110 & 0.091 & 0.002 & 0.197 & 0.052 & 0.075 & 0.783 & 0.792 & {\bf 0.457}   \\
    Cluster 2 & 0.297 & 0.369 & 0.359 & 0.066 & 0.712 & 0.612 & 0.522 & 0.362 & 0.804 & 0.769 & {\bf 0.596}   \\
    Cluster 3 & 0.959 & 0.379 & 0.524 & 0.641 & 0.943 & 0.385 & 0.984 & 0.898 & 0.398 & 0.833 & {\bf 0.923}   \\
    Cluster 4 & 0.684 & 0.096 & 0.147 & 0.192 & 0.022 & 0.393 & 0.566 & 0.245 & 0.459 & 0.883 & {\bf 0.572}   \\
    Cluster 5 & 0.244 & 0.613 & 0.318 & 0.251 & 0.884 & 0.664 & 0.245 & 0.338 & 0.176 & 0.960 & {\bf 0.789}   \\
    Cluster 6 & 0.095 & 0.059 & 0.151 & 0.152 & 0.131 & 0.302 & 0.135 & 0.173 & 0.242 & 0.599 & {\bf 0.247}   \\
    Cluster 7 & 0.629 & 0.550 & 0.271 & 0.142 & 0.364 & 0.396 & 0.033 & 0.148 & 0.629 & 0.028 & {\bf 0.398}   \\
    Cluster 8 & 0.301 & 0.103 & 0.039 & 0.012 & 0.136 & 0.147 & 0.449 & 0.281 & 0.915 & 0.889 & {\bf 0.676}   \\
    \hline
    $\frac{BSS_{Disp_j}}{TSS_{Disp_j}}$ & {\bf 0.413} & {\bf 0.258} & {\bf 0.169} & {\bf 0.126} & {\bf 0.448} & {\bf 0.358} & {\bf 0.422} & {\bf 0.235} & {\bf 0.768} & {\bf 0.840} & {\bf 0.553} \\
    & & & & & & & & & & & $\frac{BSS_{Disp}}{TSS_{Disp}}$ \\
    \hline

    \multicolumn{ 12}{c}{$QPI$}                                                                     \\
    \hline
    $\frac{BSS_{kj}}{TSS_{kj}}$ & $Y_1$ & $Y_2$ & $Y_3$ & $Y_4$ & $Y_5$ & $Y_6$ & $Y_7$ & $Y_8$ & $Y_9$ & $Y_{10}$ & $\frac{BSS_{k}}{TSS_{k}}$   \\
    \hline
    Cluster 1 & 0.841 & 0.519 & 0.078 & 0.923 & 0.918 & 0.909 & 0.087 & 0.125 & 0.900 & 0.799 & {\bf 0.841}   \\
    Cluster 2 & 0.465 & 0.348 & 0.599 & 0.358 & 0.830 & 0.817 & 0.439 & 0.715 & 0.126 & 0.605 & {\bf 0.640}   \\
    Cluster 3 & 0.960 & 0.770 & 1.000 & 1.000 & 0.933 & 0.999 & 0.985 & 0.968 & 0.977 & 0.846 & {\bf 0.999}   \\
    Cluster 4 & 0.574 & 0.076 & 0.829 & 0.995 & 0.123 & 0.481 & 0.604 & 0.514 & 0.410 & 0.899 & {\bf 0.955}   \\
    Cluster 5 & 0.234 & 0.910 & 0.990 & 0.914 & 0.958 & 0.685 & 0.410 & 0.864 & 0.298 & 0.992 & {\bf 0.964}   \\
    Cluster 6 & 0.295 & 0.060 & 0.861 & 0.771 & 0.418 & 0.853 & 0.046 & 0.351 & 0.178 & 0.745 & {\bf 0.670}   \\
    Cluster 7 & 0.948 & 0.835 & 0.953 & 0.895 & 0.816 & 0.971 & 0.523 & 0.138 & 0.763 & 0.404 & {\bf 0.905}   \\
    Cluster 8 & 0.865 & 0.404 & 0.803 & 0.998 & 0.737 & 0.820 & 0.753 & 0.704 & 0.893 & 0.814 & {\bf 0.983}   \\
    \hline
    $\frac{BSS_{j}}{TSS_{j}}$ & {\bf 0.770} & {\bf 0.512} & {\bf 0.918} & {\bf 0.989} & {\bf 0.780} & {\bf 0.891} & {\bf 0.462} & {\bf 0.549} & {\bf 0.701} & {\bf 0.877} & {\bf 0.928}\\
    & & & & & & & & & & & $B/T$ \\
    \hline
    \end{tabular}
    }
  \label{tab:QPIs}
\end{table}
Finally, in Figs. \ref{fig:Prov3_4} and \ref{fig:Prov7_8},  we show the prototypes of the obtained clusters for the better (in terms of $QPI$) and for the worst variables in partitioning the 60 stations into $K=8$ clusters.


\begin{figure}
        \centering
        \includegraphics[width=.5\textwidth]{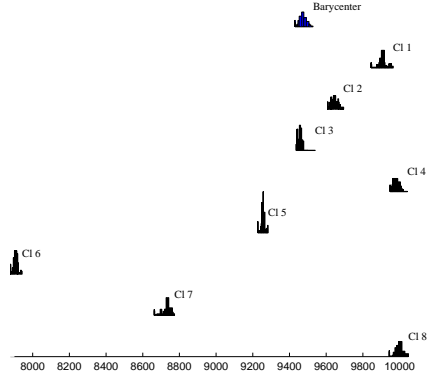}
  \caption{The general prototype and the cluster prototypes for the variable $Y_4$: Mean Station Pressure (mb) in July.}
  \label{fig:Prov3_4}
\end{figure}


\begin{figure}
\centering
    \includegraphics[width=.5\textwidth]{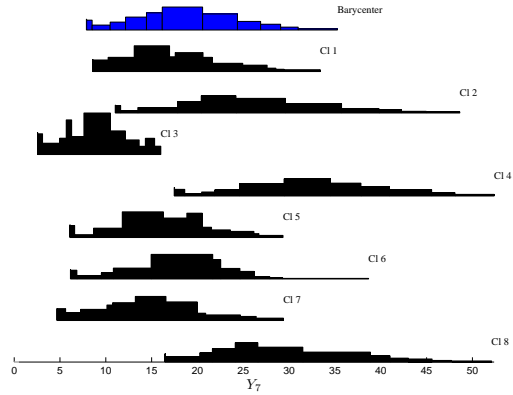}
  \caption{The general prototype and the cluster prototypes for the variable $Y_7$: Mean Wind Speed (m/s) in January.}
  \label{fig:Prov7_8}
\end{figure}


\section{Conclusions}\label{sec:conclu}
In this paper, we presented two new algorithms for the Dynamic Clustering of histogram data based on two adaptive squared Wasserstein distances. The adaptive clustering dynamic algorithm locally optimizes an adequacy criterion that measures the fitting between the clusters and their representatives (the barycenters) based on distances that change at each iteration. The first algorithm (GC-AWD) uses a globally adaptive squared Wasserstein distance for each one of the two (mean and dispersion) components of the quantile functions of histograms. The second algorithm (CDC-AWD) uses a locally adaptive squared Wasserstein distance  for each of the two (mean and dispersion) components of the quantile functions of histograms that changes according to each cluster. The advantages of using such adaptive distances is the ability to identify clusters of different sizes and shapes, while standard DCA (like in k-means case) finds spherical clusters.
Starting from an initial random partition of the objects, the adaptive DCA alternate three steps until convergence when the adequacy criterion reaches a stationary value, which represents a local (within sum of squares) minimum. In the first two steps, the algorithms give the solution for the best prototype of each cluster as well as the solution for the best adaptive distance (locally for each cluster). In the last step, the algorithm gives the solution for the best partition. The convergence as well as the time complexity of the algorithms were addressed.\\
Two experimental evaluations of the proposed methods were presented. The first was performed using two baseline settings for the generation of two hundreds synthetic datasets in order to show the usefulness of each scheme of adaptive Wasserstein distance in identifying a starting class structure in the data.  The second, using a real-word dataset, showed the use of such algorithms in an exploratory fashion. All the algorithms based on adaptive distances were compared with the standard dynamic clustering algorithm (i.e., based on a standard squared Wasserstein distance).\\
The experiment conducted on artificial data showed that the adaptive algorithms outperformed the standard one in terms of accuracy in identifying the initial class structure of the data. The experiment on the real-word dataset also demonstrated that the adaptive algorithms were able to reach a good quality of partition that was generally greater than the standard non-adaptive algorithm.





\bibliographystyle{model1-num-names}
\bibliography{bibliog}

\begin{thebibliography}{28}
\expandafter\ifx\csname natexlab\endcsname\relax\def\natexlab#1{#1}\fi
\providecommand{\bibinfo}[2]{#2}
\ifx\xfnm\relax \def\xfnm[#1]{\unskip,\space#1}\fi
\bibitem[{Bock and Diday(2000)}]{BoDid00}
\bibinfo{author}{H.~H. Bock}, \bibinfo{author}{E.~Diday},
  \bibinfo{title}{Analysis of Symbolic Data. Exploratory Methods for Extracting
  Statistical Information from Complex Data}, \bibinfo{publisher}{Springer},
  \bibinfo{address}{Berlin}, \bibinfo{year}{2000}.
\bibitem[{Billard and Diday(2007)}]{Billard2007}
\bibinfo{author}{L.~Billard}, \bibinfo{author}{E.~Diday},
  \bibinfo{title}{Symbolic Data Analysis: Conceptual Statistics and Data Mining
  (Wiley Series in Computational Statistics)}, \bibinfo{publisher}{John Wiley
  \& Sons}, \bibinfo{year}{2007}.
\bibitem[{Diday and Noirhomme-Fraiture(2008)}]{DiNoi08}
\bibinfo{author}{E.~Diday}, \bibinfo{author}{M.~Noirhomme-Fraiture},
  \bibinfo{title}{Symbolic Data Analysis and the SODAS Software},
  \bibinfo{publisher}{Wiley-Interscience}, \bibinfo{address}{New York, NY,
  USA}, \bibinfo{year}{2008}.
\bibitem[{Irpino and Verde(2006)}]{IrVer06}
\bibinfo{author}{A.~Irpino}, \bibinfo{author}{R.~Verde},
\newblock \bibinfo{title}{A new wasserstein based distance for the hierarchical
  clustering of histogram symbolic data},
\newblock in: \bibinfo{editor}{Batanjeli}, \bibinfo{editor}{Bock},
  \bibinfo{editor}{Ferligoj}, \bibinfo{editor}{Ziberna} (Eds.),
  \bibinfo{booktitle}{Data Science and Classification},
  \bibinfo{publisher}{Springer}, \bibinfo{address}{Berlin},
  \bibinfo{year}{2006}, pp. \bibinfo{pages}{185--192}.
\bibitem[{Verde and Irpino(2006)}]{IRVERLEC06}
\bibinfo{author}{R.~Verde}, \bibinfo{author}{A.~Irpino},
\newblock \bibinfo{title}{Dynamic clustering of histograms using wasserstein
  metric},
\newblock in: \bibinfo{editor}{A.~Rizzi}, \bibinfo{editor}{M.~Vichi} (Eds.),
  \bibinfo{booktitle}{Proceedings in Computational Statistics, COMPSTAT 2006},
  \bibinfo{organization}{Compstat 2006}, \bibinfo{publisher}{Physica Verlag},
  \bibinfo{address}{Heidelberg}, \bibinfo{year}{2006}, pp.
  \bibinfo{pages}{869--876}.
\bibitem[{Verde and Irpino(2008{\natexlab{a}})}]{VeIR08}
\bibinfo{author}{R.~Verde}, \bibinfo{author}{A.~Irpino},
\newblock \bibinfo{title}{Comparing histogram data using a
  mahalanobis-wasserstein distance},
\newblock in: \bibinfo{editor}{P.~Brito} (Ed.), \bibinfo{booktitle}{Proceedings
  in Computational Statistics, COMPSTAT 2008}, \bibinfo{organization}{Compstat
  2008}, \bibinfo{publisher}{Springer Verlag}, \bibinfo{address}{Heidelberg},
  \bibinfo{year}{2008}{\natexlab{a}}, pp. \bibinfo{pages}{77--89}.
\bibitem[{Verde and Irpino(2008{\natexlab{b}})}]{VerIRP08did}
\bibinfo{author}{R.~Verde}, \bibinfo{author}{A.~Irpino},
\newblock \bibinfo{title}{Dynamic clustering of histogram data: using the right
  metric},
\newblock in: \bibinfo{editor}{P.~Brito}, \bibinfo{editor}{P.~Bertrand},
  \bibinfo{editor}{G.~Cucumel}, \bibinfo{editor}{F.~De~Carvalho} (Eds.),
  \bibinfo{booktitle}{Selected contributions in data analysis and
  classification}, \bibinfo{publisher}{Springer}, \bibinfo{address}{Berlin},
  \bibinfo{year}{2008}{\natexlab{b}}, pp. \bibinfo{pages}{123--134}.
\bibitem[{Diday(1971)}]{Did71}
\bibinfo{author}{E.~Diday},
\newblock \bibinfo{title}{La m\'{e}thode des nu\'{e}s dynamique},
\newblock \bibinfo{journal}{Revue de Statistique Appliqu\'{e}e}
  \bibinfo{volume}{19} (\bibinfo{year}{1971}) \bibinfo{pages}{19--34}.
\bibitem[{Diday and Simon(1976)}]{DiSIM76}
\bibinfo{author}{E.~Diday}, \bibinfo{author}{J.~C. Simon},
\newblock \bibinfo{title}{Clustering analysis},
\newblock in: \bibinfo{editor}{K.~Fu} (Ed.), \bibinfo{booktitle}{Digital
  Pattern Classification}, \bibinfo{publisher}{Springer},
  \bibinfo{address}{Berlin}, \bibinfo{year}{1976}, pp. \bibinfo{pages}{47--94}.
\bibitem[{Gibbs and Su(2002)}]{GiSu02}
\bibinfo{author}{A.~L. Gibbs}, \bibinfo{author}{F.~E. Su},
\newblock \bibinfo{title}{On choosing and bounding probability metrics},
\newblock \bibinfo{journal}{Intl. Stat. Rev.} \bibinfo{volume}{7}
  (\bibinfo{year}{2002}) \bibinfo{pages}{419--435}.
\bibitem[{Rubner et~al.(2000)Rubner, Tomasi, and Guibas}]{Rubn00}
\bibinfo{author}{Y.~Rubner}, \bibinfo{author}{C.~Tomasi},
  \bibinfo{author}{L.~J. Guibas},
\newblock \bibinfo{title}{The earth mover's distance as a metric for image
  retrieval},
\newblock \bibinfo{journal}{Int. J. Comput. Vision} \bibinfo{volume}{40}
  (\bibinfo{year}{2000}) \bibinfo{pages}{99--121}.
\bibitem[{Mallows(1972)}]{Mall72}
\bibinfo{author}{C.~L. Mallows},
\newblock \bibinfo{title}{A note on asymptotic joint normality},
\newblock \bibinfo{journal}{Annals of Mathematics Statistics}
  \bibinfo{volume}{43} (\bibinfo{year}{1972}) \bibinfo{pages}{508--515}.
\bibitem[{Levina and Bickel(2001)}]{LeBick01}
\bibinfo{author}{E.~Levina}, \bibinfo{author}{P.~J. Bickel},
\newblock \bibinfo{title}{The earth mover's distance is the mallows distance:
  Some insights from statistics},
\newblock in: \bibinfo{booktitle}{Proceedings of the 8th International
  Conference On Computer Vision}, volume~\bibinfo{volume}{2},
  \bibinfo{publisher}{IEEE Computer Society}, \bibinfo{year}{2001}, pp.
  \bibinfo{pages}{251--256}.
\bibitem[{De~Carvalho and Lechevallier(2009{\natexlab{a}})}]{DECAYVES09}
\bibinfo{author}{F.~A.~T. De~Carvalho}, \bibinfo{author}{Y.~Lechevallier},
\newblock \bibinfo{title}{Partitional clustering algorithms for symbolic
  interval data based on single adaptive distances},
\newblock \bibinfo{journal}{Pattern Recognition} \bibinfo{volume}{42}
  (\bibinfo{year}{2009}{\natexlab{a}}) \bibinfo{pages}{1223--1236}.
\bibitem[{De~Carvalho and Lechevallier(2009{\natexlab{b}})}]{DECLEC09}
\bibinfo{author}{F.~A.~T. De~Carvalho}, \bibinfo{author}{Y.~Lechevallier},
\newblock \bibinfo{title}{Dynamic clustering of interval-valued data based on
  adaptive quadratic distances},
\newblock \bibinfo{journal}{Trans. Sys. Man Cyber. Part A} \bibinfo{volume}{39}
  (\bibinfo{year}{2009}{\natexlab{b}}) \bibinfo{pages}{1295--1306}.
\bibitem[{de~Souza and De~Carvalho(2007)}]{DECSOUZ07}
\bibinfo{author}{R.~M. C.~R. de~Souza}, \bibinfo{author}{F.~A.~T. De~Carvalho},
\newblock \bibinfo{title}{A clustering method for mixed feature-type symbolic
  data using adaptive squared euclidean distances},
\newblock in: \bibinfo{booktitle}{HIS '07: Proceedings of the 7th International
  Conference on Hybrid Intelligent Systems}, \bibinfo{publisher}{IEEE Computer
  Society}, \bibinfo{address}{Washington, DC, USA}, \bibinfo{year}{2007}, pp.
  \bibinfo{pages}{168--173}.
\bibitem[{De~Carvalho and De~Souza(2010)}]{DeCDeS10}
\bibinfo{author}{F.~A.~T. De~Carvalho}, \bibinfo{author}{R.~M. C.~R. De~Souza},
\newblock \bibinfo{title}{Unsupervised pattern recognition models for mixed
  feature--type symbolic data},
\newblock \bibinfo{journal}{Pattern Recognition Letters} \bibinfo{volume}{31}
  (\bibinfo{year}{2010}) \bibinfo{pages}{430--443}.
\bibitem[{Irpino and Romano(2007)}]{IrpinoR07}
\bibinfo{author}{A.~Irpino}, \bibinfo{author}{E.~Romano},
\newblock \bibinfo{title}{Optimal histogram representation of large data sets:
  Fisher vs piecewise linear approximation},
\newblock in: \bibinfo{editor}{M.~Noirhomme-Fraiture},
  \bibinfo{editor}{G.~Venturini} (Eds.), \bibinfo{booktitle}{EGC}, volume
  \bibinfo{volume}{RNTI-E-9} of \textit{\bibinfo{series}{Revue des Nouvelles
  Technologies de l'Information}},
  \bibinfo{publisher}{C{\'e}padu{\`e}s-{\'E}ditions}, \bibinfo{year}{2007}, pp.
  \bibinfo{pages}{99--110}.
\bibitem[{Villani(2003)}]{Villani03}
\bibinfo{author}{C.~Villani}, \bibinfo{title}{Topics in Optimal
  Transportation}, \bibinfo{publisher}{AMS}, \bibinfo{year}{2003}.
\bibitem[{Cuesta-Albertos et~al.(1997)Cuesta-Albertos, Matr\'{a}n, and
  Tuero-Diaz}]{CuALB97}
\bibinfo{author}{J.~A. Cuesta-Albertos}, \bibinfo{author}{C.~Matr\'{a}n},
  \bibinfo{author}{A.~Tuero-Diaz},
\newblock \bibinfo{title}{Optimal transportation plans and convergence in
  distribution},
\newblock \bibinfo{journal}{Journ. of Multiv. An.} \bibinfo{volume}{60}
  (\bibinfo{year}{1997}) \bibinfo{pages}{72--83}.
\bibitem[{Clark and Rae(1984)}]{Clark84}
\bibinfo{author}{R.~G. Clark}, \bibinfo{author}{M.~S. Rae},
\newblock \bibinfo{title}{A class of wasserstein metrics for probability
  distributions},
\newblock \bibinfo{journal}{Michigan Math. J.} \bibinfo{volume}{31 (2)}
  (\bibinfo{year}{1984}) \bibinfo{pages}{231--240}.
\bibitem[{Diday and Govaert(1997)}]{DiGov77}
\bibinfo{author}{E.~Diday}, \bibinfo{author}{G.~Govaert},
\newblock \bibinfo{title}{Classification automatique avec distances
  adaptatives},
\newblock \bibinfo{journal}{R.A.I.R.O. Informatique Computer Science}
  \bibinfo{volume}{11} (\bibinfo{year}{1997}) \bibinfo{pages}{329--349}.
\bibitem[{Celeux et~al.(1989)Celeux, Diday, Govaert, Lechevallier, and
  Ralambondrainy}]{CelAL89}
\bibinfo{author}{G.~Celeux}, \bibinfo{author}{E.~Diday},
  \bibinfo{author}{G.~Govaert}, \bibinfo{author}{Y.~Lechevallier},
  \bibinfo{author}{H.~Ralambondrainy}, \bibinfo{title}{Classification
  Automatique des Donn\'{e}s}, \bibinfo{publisher}{Bordas},
  \bibinfo{address}{Paris}, \bibinfo{year}{1989}.
\bibitem[{De~Carvalho et~al.(2006)De~Carvalho, Brito, and Bock}]{deca06b}
\bibinfo{author}{F.~De~Carvalho}, \bibinfo{author}{P.~Brito},
  \bibinfo{author}{H.~H. Bock},
\newblock \bibinfo{title}{Dynamic clustering for interval data based on l2
  distance},
\newblock \bibinfo{journal}{Computational Statistics} \bibinfo{volume}{2}
  (\bibinfo{year}{2006}) \bibinfo{pages}{231--250}.
\bibitem[{Meila(2005)}]{Meila05}
\bibinfo{author}{M.~Meila},
\newblock \bibinfo{title}{Comparing clusterings: an axiomatic view},
\newblock in: \bibinfo{booktitle}{In ICML '05: Proceedings of the 22nd
  international conference on Machine learning}, \bibinfo{publisher}{ACM
  Press}, \bibinfo{year}{2005}, pp. \bibinfo{pages}{577--584}.
\bibitem[{Johnson et~al.(1994)Johnson, Kotz, and Balakrishnan}]{John94}
\bibinfo{author}{N.~L. Johnson}, \bibinfo{author}{S.~Kotz},
  \bibinfo{author}{N.~Balakrishnan}, \bibinfo{title}{Continuous Univariate
  Distributions, Volume 1}, \bibinfo{publisher}{Wiley-Interscience},
  \bibinfo{year}{1994}.
\bibitem[{Hubert and Arabie(1985)}]{HUBAR85}
\bibinfo{author}{L.~Hubert}, \bibinfo{author}{P.~Arabie},
\newblock \bibinfo{title}{Comparing partitions},
\newblock \bibinfo{journal}{Journal of Classification} \bibinfo{volume}{2}
  (\bibinfo{year}{1985}) \bibinfo{pages}{193--218}.
\bibitem[{Calinski and Harabasz(1974)}]{Cali75}
\bibinfo{author}{R.~B. Calinski}, \bibinfo{author}{J.~Harabasz},
\newblock \bibinfo{title}{A dendrite method for cluster analysis},
\newblock \bibinfo{journal}{Communications in Statistics} \bibinfo{volume}{3
  (1)} (\bibinfo{year}{1974}) \bibinfo{pages}{1--27}.

\end{thebibliography}







\end{document}